\documentclass[12pt,reqno]{amsart}

\usepackage{aliascnt,amsmath,amssymb,amsthm,amsfonts,enumerate,mathrsfs,mathtools,bm,xcolor,comment}
\usepackage[abbrev]{amsrefs}
\usepackage{shuffle}
\usepackage[OT2,T1]{fontenc}
\usepackage[marginparwidth=0pt,margin=24truemm]{geometry}

\definecolor{mylinkcolor}{RGB}{16,156,81}
\definecolor{mycitecolor}{RGB}{20,80,140}

\usepackage{hyperref}
\usepackage[nameinlink]{cleveref}
\hypersetup{
  setpagesize=false,
  bookmarksnumbered=true,
  bookmarksopen=true,
  colorlinks=true,
  linkcolor=mylinkcolor,
  citecolor=mycitecolor,
  urlcolor=mycitecolor
}

\numberwithin{equation}{section}
\allowdisplaybreaks[2]
\everymath{\displaystyle}

\theoremstyle{plain}
\newtheorem{thm}{Theorem}[section]
\crefname{thm}{Theorem}{Theorems}

\newtheorem{thmA}{Theorem}

\crefname{thmA}{Theorem}{Theorems}

\newaliascnt{prop}{thm}
\newtheorem{prop}[prop]{Proposition}
\aliascntresetthe{prop}
\crefname{prop}{Proposition}{Propositions}

\newaliascnt{lem}{thm}
\newtheorem{lem}[lem]{Lemma}
\aliascntresetthe{lem}
\crefname{lem}{Lemma}{Lemmas}

\newaliascnt{cor}{thm}

\aliascntresetthe{cor}
\crefname{cor}{Corollary}{Corollaries}

\theoremstyle{definition}
\newaliascnt{dfn}{thm}
\newtheorem{dfn}[dfn]{Definition}
\aliascntresetthe{dfn}
\crefname{dfn}{Definition}{Definitions}

\newaliascnt{rem}{thm}
\newtheorem{rem}[rem]{Remark}
\aliascntresetthe{rem}
\crefname{rem}{Remark}{Remarks}

\newcommand{\QQ}{\mathbb Q}
\newcommand{\eps}{\varepsilon}
\newcommand{\bi}[2]{\genfrac{(}{)}{0pt}{}{#1}{#2}}
\newcommand{\BARI}{\operatorname{BARI}}
\newcommand{\BIMU}{\operatorname{BIMU}}
\newcommand{\ARI}{\operatorname{ARI}}
\newcommand{\swapop}{\operatorname{swap}}
\newcommand{\pushop}{\operatorname{push}}
\newcommand{\muop}{\operatorname{mu}}
\newcommand{\invmu}{\operatorname{invmu}}
\newcommand{\gaxit}{\operatorname{gaxit}}
\newcommand{\garit}{\operatorname{garit}}
\newcommand{\gaxi}{\operatorname{gaxi}}
\newcommand{\arit}{\operatorname{arit}}
\newcommand{\anit}{\operatorname{anit}}
\newcommand{\axit}{\operatorname{axit}}
\newcommand{\preari}{\operatorname{preari}}
\newcommand{\ari}{\operatorname{ari}}
\newcommand{\uri}{\operatorname{uri}}
\newcommand{\ganit}{\operatorname{ganit}}
\newcommand{\gari}{\operatorname{gari}}
\newcommand{\invgari}{\operatorname{invgari}}
\newcommand{\expari}{\operatorname{expari}}
\newcommand{\logari}{\operatorname{logari}}
\newcommand{\adari}{\operatorname{adari}}
\newcommand{\pic}{\operatorname{pic}}
\newcommand{\poc}{\operatorname{poc}}
\newcommand{\pac}{\operatorname{pac}}
\newcommand{\antiop}{\operatorname{anti}}
\newcommand{\negop}{\operatorname{neg}}
\newcommand{\asna}{\operatorname{asna}}
\newcommand{\one}{\mathbf 1}
\newcommand{\cP}{\mathcal P}
\newcommand{\cL}{\mathcal L}
\newcommand{\cD}{\mathcal D}
\newcommand{\leadop}{\operatorname{lead}}
\newcommand{\Lbial}{\ARI_{\underline{\mathrm{al}}/\underline{\mathrm{al}}}}
\newcommand{\BARIswapil}{\BARI_{\swapop,\underline{\mathrm{il}}}}
\newcommand{\BARIpol}{\BARI^{\mathrm{pol}}}

\title[Lie algebras for multiple Eisenstein series]{Lie algebras for multiple Eisenstein series\\ and multiple $q$-zeta values}

\author{Henrik Bachmann}
\address{Graduate School of Mathematics, Nagoya University, Nagoya, Japan.}
\email{henrik.bachmann@math.nagoya-u.ac.jp}

\author{Hanamichi Kawamura}
\address{Department of Mathematics, Graduate School of Science, Tokyo University of Science, 1-3 Kagurazaka, Shinjuku-ku, Tokyo, Japan.}
\email{1125512@ed.tus.ac.jp}

\date{\today}
\subjclass[2020]{Primary 11M32; Secondary 17B01}
\keywords{bimoulds, uri bracket, alternility, swap invariance, multiple Eisenstein series, multiple $q$-zeta values}

\begin{document}

\begin{abstract}
Racinet's double shuffle Lie algebra \(\mathfrak{dm}_0\) encodes the double shuffle relations of multiple zeta values.  We study two analogues of \(\mathfrak{dm}_0\) for multiple Eisenstein series and \(q\)-analogues of multiple zeta values, namely the space \(\BARIswapil\) of swap-invariant alternil bimoulds with the uri bracket of K\"uhn and Schneps, and Burmester's balanced double shuffle space \(\mathfrak{bm}_0\).  Both were conjectured to be Lie algebras.  We prove both conjectures and show that \(\mathfrak{bm}_0\) is isomorphic to the Lie algebra of finite-depth polynomial elements of \(\BARIswapil\).
\end{abstract}

\maketitle

\section{Introduction}

Racinet's double shuffle Lie algebra \(\mathfrak{dm}_0\) \cite{Rac} controls the double shuffle relations of multiple zeta values.  In this note we study the space \(\BARIswapil\) of swap-invariant alternil bimoulds with the uri bracket and Burmester's balanced double shuffle space \(\mathfrak{bm}_0\).  They are analogues of \(\mathfrak{dm}_0\) for the algebra of multiple Eisenstein series and for \(q\)-analogues of multiple zeta values.  Both were conjectured to be Lie algebras, and we prove both conjectures.

The uri bracket on bimoulds was proposed by K\"uhn and Schneps and studied systematically by Burmester in \cite{Bu1}.  Conjugating \'{E}calle's ari bracket \cite{Ec1} with two rational flexion operators (see \cref{def:uri}), one gets a Lie bracket on alternil bimoulds with poles, but it is not clear from the definition that these poles cancel for power-series or polynomial inputs.  Burmester and K\"uhn identify the ari bracket with a post-Lie structure on a free Lie algebra and conjecture an analogous description for the uri bracket \cite{BuK}.

We write \(\BARIswapil\) for the space of power-series bimoulds which are alternil, swap invariant and even in depth one (see \Cref{sec:bimoulds}), and \(\mathfrak B\) for its subspace of finite-depth polynomial bimoulds over \(\QQ\) (see \Cref{sec:polynomial}).  Our first main result is the following.

\begin{thmA}\label{thm:A}
\begin{enumerate}[(i)]
\item The space \(\BARIswapil\) is a Lie algebra with the uri bracket, i.e.\ we have
\[
 \uri(\BARIswapil,\BARIswapil)\subseteq\BARIswapil.
\]
\item The space \(\mathfrak B\) is a weight-graded Lie subalgebra of \(\BARIswapil\).
\end{enumerate}
\end{thmA}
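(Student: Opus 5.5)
Conjugation turns the problem into statements about the ari bracket, where closure results are known. Recall (\cref{def:uri}) that the uri bracket is the ari bracket conjugated by two rational flexion operators; write this as \(\uri(A,B)=\Phi^{-1}\ari(\Phi A,\Phi B)\). The plan has three stages: closure under alternility, closure under swap invariance, and removal of the poles that \(\Phi\) introduces. For alternility I will use Écalle's theorem that \(\ari\) preserves alternility (alternal bimoulds form a Lie algebra under \(\ari\)). I then show that \(\Phi\) carries alternil bimoulds to alternal ones, possibly with poles, by checking that \(\Phi\) intertwines the relevant shuffle coproducts. This gives that \(\uri\) of two alternil bimoulds is alternil as a rational bimould, which is the Kühn–Schneps/Burmester statement and I will take it as the starting point.

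For swap invariance, I will compute \(\swapop\circ\Phi\circ\swapop\). The expectation is that it equals a second flexion conjugation \(\Psi\) of the same shape. The swap then intertwines \(\uri\) with a bracket of the form \(\Psi^{-1}\ari_{\swapop}(\Psi\cdot,\Psi\cdot)\). Écalle's identity relating \(\swapop\circ\ari\) to a twisted bracket on swapped bimoulds should reduce this to \(\uri\) itself on swap-invariant inputs. Concretely, I aim for a formula
\[
\swapop\,\uri(A,B)=\uri(\swapop A,\swapop B)+(\text{terms vanishing when } A,B \text{ are alternil and even in depth one}).
\]
Together with the evenness in depth one, which is preserved because \(\uri\) raises depth, this gives \(\uri(A,B)\in\BARIswapil\) modulo regularity.

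The main obstacle is regularity: showing that \(\uri(A,B)\) has no poles when \(A,B\) are power series. My first attempt will be to write \(\uri(A,B)\) explicitly as a sum of flexion terms with denominators of the form \(u_i-u_j\) or \(v_i-v_j\), and to pair the terms with opposite residues along each hyperplane. The pairing should use alternility in the \(u\)-variables for one family of denominators, and swap invariance for the family in the \(v\)-variables, since swap exchanges the two kinds of denominators. If the direct pairing becomes unwieldy, I will switch to a symmetry argument. The candidate singular part along \(u_i=u_j\) is antisymmetric under a transposition. If swap invariance forces it also to be symmetric, it must vanish, so the pole is removable. Swap invariance is essential here, which explains why the result concerns \(\BARIswapil\) rather than all alternil bimoulds.

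For (ii), the regularity argument shows that if \(A,B\) are polynomial of finite depth with rational coefficients, then \(\uri(A,B)\) is a rational function with no poles, hence a polynomial over \(\QQ\). Its depth is bounded by the sum of the depths of \(A\) and \(B\). The flexion operators are homogeneous, so \(\uri\) is additive in weight (degree plus depth), which gives the weight grading.
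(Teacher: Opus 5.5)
Your swap-invariance step contains no argument, and it is the core of the theorem. For swap-invariant inputs, the formula you aim for, $\swapop\,\uri(A,B)=\uri(\swapop A,\swapop B)+(\dots)$, is just a restatement of the claim. Nothing in the proposal explains why the correction terms vanish.

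Carrying out your first computation gives $\swapop\circ T^{-1}=\ganit_{\pac}\circ\swapop$. So for $Q=T^{-1}(A)$ with $A$ swap invariant you only get $\swapop(Q)=\ganit_{\cP}(Q)$, where $\cP_r=\prod_i(1/X_i+1/Y_i)$. The swap of $Q$ is $E$-alternal rather than alternal, so $Q$ is not bialternal in general. Hence the swap compatibility $\swapop\,\ari(P,P')=\ari(\swapop P,\swapop P')$, which holds for push-invariant bimoulds such as bialternals, does not apply. Even if it did, you would still have to identify $\ganit_{\cP}^{-1}\ari(\ganit_{\cP}Q_a,\ganit_{\cP}Q_b)$ with $\ari(Q_a,Q_b)$, which is again the whole problem.

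The missing idea is the following.
\begin{enumerate}[(1)]
\item View $\mathscr J=T^{-1}\circ\swapop\circ T=\ganit_{\cP}^{-1}\circ\swapop$ as an involution of the dimorphic space $\cD_E$ attached to the symmetric flexion unit $E(u,v)=1/u+1/v$.
\item Prove it is an ari automorphism by transport from the bialternal Lie algebra. The maps $H_+=\adari(\mathfrak{ess})$ and $H_-=\adari(\ddot{\mathfrak e}\mathfrak{ss})$ are ari isomorphisms $\Lbial\to\cD_E$, which also shows that $\cD_E$ is closed under ari. The second fundamental identity then gives $\mathscr J=H_-\circ\swapop\circ H_+^{-1}$.
\item Conclude that $T^{-1}(\BARIswapil)$ is the regular part of the fixed locus of an ari automorphism of $\cD_E$, hence closed under ari.
\end{enumerate}

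Your regularity plan rests on a wrong picture of the singularities. $T=\ganit_{\pic}$ and $T^{-1}=\ganit_{\poc}$ create only denominators that are linear forms in the upper variables, and the flexions in ari create none. So $\uri(A,B)$ has no poles along $Y_i=Y_j$ to cancel. Moreover, regularity holds for arbitrary power-series inputs, without alternility or swap invariance. Swap invariance is therefore not essential for it, and the residue-pairing scheme, which is in any case not specified enough to check, is beside the point.

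The paper gets regularity from $-\uri(a,b)=[\eps\eta]\bigl(A\diamond B-B\diamond A\bigr)$ with $A=\one+\eps a$, $B=\one+\eta b$. Here $A\diamond B=T\bigl(\gari(T^{-1}B,T^{-1}A)\bigr)$ has an explicit formula whose only quotients are divided differences $(F(0)-F(X_i))/X_i$.

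Your intuition about two families of denominators can be rescued differently. Once swap invariance is established as an identity of localized bimoulds, regularity follows at once. The denominators of $\uri(A,B)$ involve only upper variables and those of its swap only lower ones, and these are coprime.

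Finally, in (ii) the bound of the depth by the sum of the depths is unjustified, since $T^{-1}$ spreads a finite-depth bimould over all depths. Finite depth comes instead from weight: a depth-$r$ component of weight $k$ has degree $k-r$, so it vanishes for $r>k$.
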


The statement of \cref{thm:A} was conjectured by K\"uhn and Schneps (see \cite{Bu1}*{Conjecture~5.19}), who studied the polynomial space and bracket underlying it and obtained partial results toward the closure assertion.\footnote{Their work was not published.  See K\"uhn's talk slides \cite{Kue} for an account.}  Burmester constructed a uri Lie subalgebra of the polynomial elements of \(\BARIswapil\) in \cite{Bu1}*{Theorem~5.20}.  Further properties of \(\BARIswapil\) and the associated depth-graded Lie algebra are studied in \cite{BKS}.

The space \(\mathfrak B\) also has a description in terms of noncommutative polynomials.  For this, we use Burmester's balanced double shuffle space \(\mathfrak{bm}_0\), introduced in \cite{Bu1}.  It is the analogue of \(\mathfrak{dm}_0\) for formal multiple Eisenstein series.  These series can be seen as the weight-graded version of multiple \(q\)-zeta values (see \cites{Bu2,BIM}).  Burmester also constructed an embedding of vector spaces \(\theta:\mathfrak{dm}_0\hookrightarrow\mathfrak{bm}_0\) and an explicit bracket \(\{\ ,\ \}_{\mathrm b}\) on the ambient space.  She conjectured that \(\mathfrak{bm}_0\) is closed under this bracket \cite{Bu1}*{Conjectures~4.26(i) and~4.52}.  Her bimould realization gives a vector-space isomorphism \(\Phi:\mathfrak{bm}_0\to\mathfrak B\) \cite{Bu1}*{Corollary~5.51}.  Our second main result is the following.

\begin{thmA}\label{thm:B}
The space \(\mathfrak{bm}_0\) is a weight-graded Lie algebra under the bracket \(\{\ ,\ \}_{\mathrm b}\), and
\[
 \Phi:(\mathfrak{bm}_0,\{\ ,\ \}_{\mathrm b})
 \overset{\sim}{\longrightarrow}(\mathfrak B,\uri)
\]
is an isomorphism of weight-graded Lie algebras.
\end{thmA}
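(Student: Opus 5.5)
The plan is to deduce \cref{thm:B} from \cref{thm:A}(ii) by transporting structure along Burmester's vector-space isomorphism \(\Phi:\mathfrak{bm}_0\to\mathfrak B\) of \cite{Bu1}*{Corollary~5.51}. The key point to establish is that \(\Phi\) intertwines the brackets: for all \(f,g\) in the ambient space on which \(\{\ ,\ \}_{\mathrm b}\) is defined (or at least for \(f,g\in\mathfrak{bm}_0\)),
\[
 \Phi\bigl(\{f,g\}_{\mathrm b}\bigr)=\uri\bigl(\Phi(f),\Phi(g)\bigr).
\]
Once this identity holds, closure follows. For \(f,g\in\mathfrak{bm}_0\), \cref{thm:A}(ii) gives \(\uri(\Phi(f),\Phi(g))\in\mathfrak B\). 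Since \(\Phi\) is a bijection onto \(\mathfrak B\), there is a unique \(h\in\mathfrak{bm}_0\) with \(\Phi(h)=\uri(\Phi(f),\Phi(g))\). The map \(\Phi\) extends injectively to the ambient space, so \(\{f,g\}_{\mathrm b}=h\in\mathfrak{bm}_0\).

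Antisymmetry and the Jacobi identity for \(\{\ ,\ \}_{\mathrm b}\) on \(\mathfrak{bm}_0\) are then pulled back from the uri bracket on \(\mathfrak B\), which is a Lie bracket by \cref{thm:A}. Weight-gradedness follows because \(\Phi\) preserves weight (polynomial degree plus depth on the bimould side) and \(\mathfrak B\) is weight-graded by \cref{thm:A}(ii). Hence \(\Phi\) is an isomorphism of weight-graded Lie algebras.

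So the real work is the intertwining identity, and I would prove it at the level of the ambient spaces, not on \(\mathfrak{bm}_0\) alone. Recall that \(\Phi\) is the composite of the standard map from noncommutative polynomials to bimoulds (the ma/mi-type coefficient extraction in depth \(r\)) with the flexion-type twist used to define uri from ari. Likewise \(\{\ ,\ \}_{\mathrm b}\) is, by construction in \cite{Bu1}, a Lie-bracket-plus-derivation-terms expression, analogous to the Ihara/Poisson bracket. The first step is to recall the classical fact that the Ihara bracket corresponds to \(\ari\) under the polynomial-to-mould map. The second step is to conjugate both sides by the rational flexion operators of \cref{def:uri}: uri is \(\ari\) conjugated by these operators, and \(\{\ ,\ \}_{\mathrm b}\) should be the corresponding conjugate of the Ihara-type bracket by the operator underlying \(\theta\) and \(\Phi\). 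I would check the identity depth by depth, matching the "lu" (concatenation) part and the "arit" (derivation) part separately, and verify it on generators \(f=\) a single word by linearity.

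The main obstacle is this matching. Burmester's bracket is given by explicit combinatorial formulas, while uri contains poles that cancel only after summation. My first attempt would be to prove the identity in the rational-function bimould space, where both operators are invertible, so that the poles cause no trouble. I would then restrict to polynomial inputs, using \cref{thm:A}(ii) to know the output is polynomial.

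If a direct comparison proves too messy, the fallback is to compare the two sides on the depth-graded pieces \(\mathfrak{bm}_0\cap\) depth \(\le r\). Here I would use induction on depth: both sides are derivation-type expressions determined by their action on lower depth, together with the lowest-depth leading terms.
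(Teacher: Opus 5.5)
Your overall architecture (an intertwining identity, then pulling back along \(\Phi\)) matches the paper's. However, there is a genuine gap in your closure step. You assert that \(\Phi\) extends injectively to the ambient space, but it does not. \(\Phi(f)=\rho_B(f)^{\#_Y}\) only reads the coefficients of words \(b_{k_1}b_0^{m_1}\cdots b_{k_r}b_0^{m_r}\) with all \(k_i\geq1\), so it vanishes on the whole subspace \(b_0\QQ\langle\mathcal B\rangle\). From \(\Phi(h)=\Phi(\{f,g\}_{\mathrm b})\) you can therefore only conclude \(\Pi_0(h)=\Pi_0(\{f,g\}_{\mathrm b})\). A priori \(\{f,g\}_{\mathrm b}\) differs from \(h\) by a combination of words beginning in \(b_0\), and then it need not be primitive or satisfy \((\,\cdot\mid b_0)=0\). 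The missing idea is a subspace that contains \(\mathfrak{bm}_0\), is closed under \(\{\ ,\ \}_{\mathrm b}\), and on which \(\Pi_0\) is injective. The paper takes \(\ker(\partial_0)\), where \(\partial_0\) is the concatenation derivation with \(\partial_0(b_0)=1\) and \(\partial_0(b_k)=0\) for \(k\geq1\). Lazard elimination in the free Lie algebra on \(v_0=b_0,v_1,v_2,\ldots\), together with \((f\mid b_0)=0\), gives \(\mathfrak{bm}_0\subseteq\ker(\partial_0)\). Burmester's section \(\operatorname{sec}_q\) inverts \(\Pi_0\) on \(\ker(\partial_0)\). Finally, the rule \([\partial_0,d_f^{\mathrm b}]=d_{\partial_0f}^{\mathrm b}\) makes \(\partial_0\) a derivation of \(\{\ ,\ \}_{\mathrm b}\) (\cref{lem:partial-b-bracket}). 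Your uniqueness argument only closes once these three facts are in place.

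You correctly single out the intertwining identity as the other ingredient, but it is only sketched, and the sketch rests on inaccurate premises. \(\Phi\) contains no flexion twist: it is coefficient extraction followed by the linear substitution \(Y_i\mapsto Y_1+\cdots+Y_i\), and \(T=\ganit_{\pic}\) enters only through the definition of \(\uri\). Nor is \(\{\ ,\ \}_{\mathrm b}\) a conjugate of the Ihara bracket by anything underlying \(\theta\), which is merely a vector-space embedding of \(\mathfrak{dm}_0\). Proving the identity on the whole ambient space is also the wrong target. \(\Phi\) discards the \(b_0\)-initial part of \(f\), while, for instance, the term \(-gf\) of \(\{f,g\}_{\mathrm b}\) carries that part into words not beginning in \(b_0\). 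So there is no reason for \(\Phi(\{f,g\}_{\mathrm b})\) to depend on \(\Phi(f),\Phi(g)\) alone, and the paper proves the identity only for \(f,g\in\mathfrak{bm}_0\). The concrete mechanism there is \cite{Bu1}*{Theorem~5.52}, namely \(\Phi(d_f^q(g)+gf)=\operatorname{preuri}_{\mathrm{Bu}}(\Phi(f),\Phi(g))\). This is combined with the observation that Burmester's divided-difference correction satisfies \(C(A)=A+\lambda(A)\). Hence \(\operatorname{urit}_A=\axit(A,\lambda(A))\) is the \(\eps\)-linearization of \(\gaxit_{\one+\eps A,\widetilde{\one+\eps A}}\), and \(\operatorname{preuri}_{\mathrm{Bu}}(A,B)\) is the \(\eps\eta\)-coefficient of \((\one+\eps A)\diamond(\one+\eta B)\). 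Antisymmetrizing and using \eqref{eq:tangent-uri} then yields \(\uri\). Your pull-back of antisymmetry and Jacobi from \((\mathfrak B,\uri)\) is fine once closure and the identity on \(\mathfrak{bm}_0\) are established; the paper instead cites \cite{Bu1}*{Theorem~3.20}.
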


The space \(\mathfrak{bm}_0\) is attached to the algebra \(\mathcal G^{\!f}\) of formal multiple Eisenstein series introduced in \cites{Bu2,BIM}.  The elements of \(\mathcal G^{\!f}\) are swap invariant and their product is the bi-stuffle product.  Therefore the generating series of the formal multiple Eisenstein series is a symmetril and swap-invariant bimould over \(\mathcal G^{\!f}\) (see \cite{BIM}*{Section~2}).  Alternility and swap invariance are the corresponding tangent conditions at \(\one\).  Together with the parity condition in depth one, they define \(\BARIswapil\), and \(\mathfrak B\) is its finite-depth polynomial part.  The formal quasimodular subalgebra of \(\mathcal G^{\!f}\) is isomorphic to the algebra \(\widetilde{\mathcal M}\) of quasimodular forms \cite{BIM}*{Theorem~5.4}, and the defining relations of \(\mathcal G^{\!f}\) are the graded relations underlying \cite{BK}*{Conjecture~2.6}.  For multiple zeta values one expects an algebra isomorphism
\[
 \mathcal Z\overset{?}{\cong}\QQ[\zeta(2)]\otimes
 \mathcal U(\mathfrak{dm}_0)^\vee\,,
\]
where \(\mathcal U(\mathfrak{dm}_0)^\vee\) denotes the graded dual of the universal enveloping algebra of \(\mathfrak{dm}_0\), and for formal multiple zeta values this is a theorem of Racinet \cite{Rac} (see \cite{Bu1}*{Corollary~B.32}).  This suggests the analogue
\begin{equation}\label{eq:Gf-conjecture}
 \mathcal G^{\!f}
 \overset{?}{\cong}
 \widetilde{\mathcal M}\otimes\mathcal U(\mathfrak{bm}_0)^\vee,
\end{equation}
in which the quasimodular forms take the place of \(\zeta(2)\).  In terms of Hilbert series, the dimension conjecture of \cite{BK}*{Conjecture~1.3} can then be written as
\begin{align}\label{eq:Gf-dimension-conjecture}
 \sum_{k\geq0}\dim_\QQ\mathcal G^{\!f}_kX^k
 &\overset{?}{=}
 \widetilde{\mathsf M}(X)
 \frac{1}{1-\mathsf D(X)(X+\mathsf O(X))
              +\mathsf D(X)\mathsf W(X)}\notag\\
 &=\frac{1}{1-X-X^2-X^3+X^6+X^7+X^8+X^9},
\end{align}
where
\[
 \mathsf D(X)=\frac1{1-X^2},\qquad
 \mathsf O(X)=\frac{X^3}{1-X^2},\qquad
 \mathsf M(X)=\frac1{(1-X^4)(1-X^6)},
\]
\[
 \mathsf S(X)=X^{12}\mathsf M(X),\qquad
 \widetilde{\mathsf M}(X)=\mathsf D(X)\mathsf M(X),\qquad
 \mathsf W(X)=\mathsf M(X)+\mathsf S(X)-1.
\]
Here \(\mathsf W(X)\) is the Hilbert--Poincar\'e series of period polynomials.  For comparison, Zagier's dimension conjecture for multiple zeta values reads
\[
 \sum_{k\geq0}\dim_\QQ\mathcal Z_kX^k
 \overset{?}{=}\mathsf D(X)\,\frac{1}{1-\mathsf O(X)}
 =\frac{1}{1-X^2-X^3}\,,
\]
where \(\mathsf D(X)\) counts the powers of \(\zeta(2)\) and \(\mathsf O(X)\) counts the conjectural generators of \(\mathfrak{dm}_0\), one in each odd weight \(k\geq3\).  In \eqref{eq:Gf-dimension-conjecture} the factor \(\widetilde{\mathsf M}(X)\) replaces \(\mathsf D(X)\).  Instead of one generator in each odd weight, \(\mathsf D(X)(X+\mathsf O(X))\) counts the generators \(\xi_k^m=\delta^m(\xi_k)\), for odd \(k\geq1\) and \(m\geq0\).  Here \(\delta\) is the derivation of \((\mathfrak B,\uri)\) from \cite{Ba} which raises the weight by two (see \eqref{eq:delta}), so these are the \(\xi_k\) together with all their derivatives.  The factor \(\mathsf D(X)\mathsf W(X)\) counts the relations coming from period polynomials and all their derivatives \cite{Bu1}*{Conjecture~1.22(ii)--(iii) and Equation~(4.56.1)}.  In \Cref{sec:examples} we compute \(\mathfrak B\) in weight at most nine and list explicit elements.

For \cref{thm:A}, alternility is preserved after localization and \eqref{eq:diamond-depth-one} makes the depth-one condition automatic.  It therefore remains to prove regularity and swap invariance.  Regularity in every depth follows from the divided-difference recursion in \Cref{sec:regular,sec:conjugation}, which also appears in the joint work of the first author and Burmester \cite{BB2}.  To prove swap invariance, we use the dimorphic transport isomorphisms attached to the symmetric flexion unit \(1/u+1/v\), established by the second author in \cite{Kaw2}, to show that \(\mathscr J=T^{-1}\circ\swapop\circ T\) with \(T=\ganit_{\pic}\) is an automorphism of a dimorphic ari Lie algebra, whose fixed locus is closed under ari.  Notice that the isomorphism \(\Phi\) alone does not prove \cref{thm:B}, since the bimould realization forgets every word beginning in \(b_0\).  In \Cref{sec:balanced} we show that these coefficients are determined and compare the bracket \(\{\ ,\ \}_{\mathrm b}\) with uri using the derivation formula of \cite{Bu1}.

We will not talk about multiple Eisenstein series and \(q\)-analogues of multiple zeta values themselves in this paper.  For these we refer to \cite{BK} for \(q\)-analogues of multiple zeta values, to \cite{BB1} for combinatorial multiple Eisenstein series, to \cite{BKM} for relations among multiple Eisenstein series and their derivatives, and to \cites{Bu2,BIM} for the formal version.

\medskip
\noindent\textbf{Acknowledgements.}  The authors would like to thank Ulf K\"uhn for comments on an early version of this paper.  This project was partially supported by JSPS KAKENHI Grant 26K22254.

\section{Bimoulds and coefficient symmetries}\label{sec:bimoulds}

Let \(R\) be a commutative \(\QQ\)-algebra.  A bimould over \(R\) is a family
\[
 A=(A_r)_{r\geq0},\qquad
 A_r\in R[[X_1,Y_1,\ldots,X_r,Y_r]].
\]
We write \(\BARI\) for the space of bimoulds with \(A_0=0\).  The flexion operators which define the uri bracket produce denominators, and we therefore also need bimoulds in which poles are allowed.  The space of such bimoulds is denoted by \(\BARI^{\mathrm{loc}}\), with the poles specified in \eqref{eq:poles} below.  A bimould is \emph{normalized} if \(A_0=1\).  The normalized bimould with all positive-depth components equal to zero is denoted by \(\one\).

Write
\begin{equation}\label{eq:coefficient-expansion}
 A_r\bi{X_1,\ldots,X_r}{Y_1,\ldots,Y_r}
 =\sum_{\substack{k_1,\ldots,k_r\geq1\\m_1,\ldots,m_r\geq0}}
 a\bi{k_1,\ldots,k_r}{m_1,\ldots,m_r}
 \prod_{i=1}^r\frac{X_i^{k_i-1}Y_i^{m_i}}{m_i!}.
\end{equation}
Let \(y_{k,m}\) be a letter of weight \(k+m\).  The bi-stuffle product is defined recursively by \(1*w=w*1=w\) and
\begin{align*}
 y_{k,m}u*y_{l,n}v
 &={}
 y_{k,m}(u*y_{l,n}v)
 +y_{l,n}(y_{k,m}u*v)
 +y_{k+l,m+n}(u*v).
\end{align*}
We also write \(\shuffle\) for the ordinary shuffle product on the same alphabet.  The coefficient map sends \(y_{k_1,m_1}\cdots y_{k_r,m_r}\) to the coefficient displayed in \eqref{eq:coefficient-expansion}.  A bimould in \(\BARI\) is \emph{alternal} if this map vanishes on \(u\shuffle v\) for every pair of nonempty words \(u,v\).  It is \emph{alternil} if the same condition holds with \(*\) instead of the shuffle product.  A normalized bimould is \emph{symmetral} or \emph{symmetril} if its coefficient map is a character for the shuffle or bi-stuffle product, respectively.  Notice that the tangent conditions at \(\one\) are alternality and alternility.  For bimoulds with poles, let
\[
 A(u\shuffle v)=\sum_{\sigma\in\operatorname{Sh}(p,q)}
 A_{p+q}(w_{\sigma(1)},\ldots,w_{\sigma(p+q)})
\]
for words \(u=w_1\cdots w_p\) and \(v=w_{p+1}\cdots w_{p+q}\).  We define alternality by \(A(u\shuffle v)=0\), and symmetrality by \(A(u\shuffle v)=A(u)A(v)\).  Alternility and symmetrility extend to bimoulds with poles through the corresponding flexed bi-stuffle identities \cite{Bu1}*{Remarks~C.15(i) and~C.18}.

\'{E}calle's swap is the involution given in our coordinates by
\[
 \swapop(A)_r\bi{X_1,\ldots,X_r}{Y_1,\ldots,Y_r}
 =A_r\bi{Y_1+\cdots+Y_r,\ldots,Y_1}
 {X_r,X_{r-1}-X_r,\ldots,X_1-X_2}.
\]
We use \(\BARIswapil\) for the alternil bimoulds satisfying
\begin{equation}\label{eq:swap-parity}
 \swapop(A)=A,\qquad
 A_1\bi{-X_1}{-Y_1}=A_1\bi{X_1}{Y_1}.
\end{equation}
Following \'{E}calle's convention \cite{Sc}*{Section~2.5}, the underline indicates the depth-one condition in \eqref{eq:swap-parity}.  In the notation of \cite{Bu1}*{Theorem~5.2} the space \(\BARIswapil\) is \(\BARI_{\mathrm{il},\swapop}\).

\section{GAXI, GARI and the uri bracket}\label{sec:flexion}

We use the flexion conventions recalled in \cite{Sc} and \cite{Kaw1}.  With these conventions the tangent bracket of GARI is ari.  The terms \emph{symmetril} and \emph{alternil} refer to the standard flexion unit \(1/X\).

All flexion operators below produce denominators which are products of the linear forms \(X_i\) and \(X_i-X_j\).  We therefore work in the ring
\begin{equation}\label{eq:poles}
 \cL_r(R)
 :=R[[X_1,\ldots,X_r,Y_1,\ldots,Y_r]]
   \Bigl[\tfrac{1}{X_i},\tfrac{1}{X_i-X_j}\ \Big|\ 1\leq i\neq j\leq r\Bigr],
\end{equation}
whose elements are the power series in the \(X_i\) and \(Y_i\) in which poles along the hyperplanes \(X_i=0\) and \(X_i=X_j\) are allowed.  Since the inverted elements are non-zerodivisors, the power-series ring embeds into \(\cL_r(R)\).  We write \(\BARI^{\mathrm{loc}}\) for the space of bimoulds with \(A_0=0\) whose depth-\(r\) component lies in \(\cL_r(R)\), and \(\BIMU^{\mathrm{loc}}=\one+\BARI^{\mathrm{loc}}\) for the normalized bimoulds with these poles.  All substitutions occurring in the flexion formulas of \Crefrange{sec:flexion}{sec:polynomial} preserve \(\cL_r(R)\).  The swap turns a pole in the \(X_i\) into a pole in the \(Y_i\), and for this reason more poles have to be allowed in \Cref{sec:swap-problem}.

For a bimould \(M\), a linear form \(t\) in the upper variables, and \(1\leq p\leq q\), put
\[
 M^{(t)}\langle p,q\rangle
 :=M_{q-p+1}\bi{X_p-t,\ldots,X_q-t}{Y_p,\ldots,Y_q},
 \qquad M^{(t)}\langle p,p-1\rangle:=1,
\]
and write \(Y_{p:q}=Y_p+\cdots+Y_q\).  We also use \(w_i=\bi{X_i}{Y_i}\).

The concatenation product is
\begin{equation}\label{eq:mu}
 \muop(M,N)(w_1\cdots w_n)
 =\sum_{p=0}^nM(w_1\cdots w_p)N(w_{p+1}\cdots w_n).
\end{equation}
Its unit is \(\one\), and \(\invmu(M)\) denotes the inverse of a normalized bimould.  For an iterated product we write
\[
 \muop(M_1,\ldots,M_s)
 =\muop\bigl(\muop(M_1,\ldots,M_{s-1}),M_s\bigr).
\]

For normalized bimoulds \(B,C\), define \(\gaxit_{B,C}(M)(\varnothing)=M(\varnothing)\) and, for \(n\geq1\),
\begin{align}\label{eq:gaxit}
 \gaxit_{B,C}(M)(w_1\cdots w_n)
 ={}&\sum_{s\geq1}
 \sum_{\substack{0=q_0<p_1\leq q_1<\cdots<p_s\leq q_s=n}}
 M(\widehat w_{p_1}\cdots\widehat w_{p_s})\notag\\
 &\mathrel{\phantom{=}}\prod_{i=1}^s
 B^{(X_{p_i})}\langle q_{i-1}+1,p_i-1\rangle
 C^{(X_{p_i})}\langle p_i+1,q_i\rangle,
\end{align}
where
\[
 \widehat w_{p_i}=\bi{X_{p_i}}{Y_{q_{i-1}+1:q_i}}.
\]
Thus a term selects marked letters.  Each marked letter absorbs the lower variables in its block, while the upper variables in the two adjacent blocks are translated by its upper variable.  This is \'{E}calle's standard \(\gaxit\).

We use the multiplicativity of \(\gaxit\) for \(\muop\) \cite{Kaw1}*{Proposition~3.12}.  Its composition law \cite{Kaw1}*{Proposition~3.10} is
\begin{equation}\label{eq:gaxi}
 \gaxit_{B,C}\circ\gaxit_{A,D}
 =\gaxit_{E,F},
 \qquad
 (E,F)=
 \bigl(\muop(\gaxit_{B,C}(A),B),
       \muop(C,\gaxit_{B,C}(D))\bigr).
\end{equation}
This defines the associative GAXI product \(\gaxi((A,D),(B,C))=(E,F)\) on normalized pairs.

Put
\[
 \ganit_C=\gaxit_{\one,C},\qquad
 \garit_Q=\gaxit_{Q,\invmu(Q)},\qquad
 \gari(P,Q)=\muop(\garit_Q(P),Q).
\]
The pairs \((Q,\invmu(Q))\) form a subgroup of GAXI.  In particular,
\[
 \garit_Q\circ\garit_P=\garit_{\gari(P,Q)},
\]
and GARI is an associative group law on \(\BIMU^{\mathrm{loc}}\).

Over the dual numbers, define \(\arit\) by
\[
 \garit_{\one+\eps b}
 =\operatorname{id}+\eps\arit(b).
\]
For \(a,b\in\BARI^{\mathrm{loc}}\), define
\begin{align}\label{eq:ari}
 \preari(a,b)&=\arit(b)(a)+\muop(a,b),\notag\\
 \ari(a,b)&=\preari(a,b)-\preari(b,a).
\end{align}
Equivalently,
\begin{equation}\label{eq:gari-tangent}
 \ari(a,b)=[\eps\eta]\bigl(
 \gari(\one+\eps a,\one+\eta b)
 -\gari(\one+\eta b,\one+\eps a)\bigr),
\end{equation}
where $[\eps\eta]$ denotes the coefficient at $\eps\eta$.  In particular, ari is a Lie bracket.  It preserves alternal bimoulds by \cite{Sc}*{Proposition~2.5.2}.

The special normalized bimoulds pic and poc are
\begin{equation}\label{eq:pic-poc}
 \pic_r=\frac{1}{X_1\cdots X_r},\qquad
 \poc_r=-\frac{1}{X_1(X_1-X_2)\cdots(X_{r-1}-X_r)}
 \qquad(r\geq1),
\end{equation}
with depth-zero component one.  We put
\begin{equation}\label{eq:T}
 T:=\ganit_{\pic},\qquad T^{-1}=\ganit_{\poc},
\end{equation}
the second identity being the flexion-unit inverse formula \cite{Kaw1}*{Proposition~5.3}.  Moreover, \(T\) restricts to an isomorphism from alternal to alternil bimoulds in \(\BARI^{\mathrm{loc}}\), with inverse \(T^{-1}\).  This transport was first proved by Komiyama \cite{Ko}*{Theorem~3.24} in the mould setting.  The bimould version used here is \cite{Bu1}*{Proposition~5.9}.

\begin{dfn}\label{def:uri}
For \(a,b\in\BARI^{\mathrm{loc}}\), set
\begin{equation}\label{eq:uri}
 \uri(a,b)=T\bigl(\ari(T^{-1}a,T^{-1}b)\bigr).
\end{equation}
\end{dfn}

The bracket uri is the ari bracket transported by \(T\).  Therefore it is a Lie bracket on \(\BARI^{\mathrm{loc}}\) which preserves alternility.

\section{A regular power-series operation}\label{sec:regular}

We define a power-series operation whose only quotients are divided differences.  The same construction appears in the joint work of the first author and Burmester on the coproduct for formal multiple Eisenstein series \cite{BB2}.

\begin{dfn}\label{def:tilde}
For a normalized bimould \(A\), define \(\widetilde A_0=1\) and, for \(r\geq1\),
\begin{align}\label{eq:tilde-recursion}
 \widetilde A_r={}&-A_r
 -\sum_{i=1}^{r-1}A^{(0)}\langle1,i\rangle
 \widetilde A^{(0)}\langle i+1,r\rangle\notag\\
 &+\sum_{i=1}^r
 \frac{A^{(0)}\langle1,i-1\rangle
 \widetilde A^{(0)}\langle i+1,r\rangle
 -A^{(X_i)}\langle1,i-1\rangle
 \widetilde A^{(X_i)}\langle i+1,r\rangle}{X_i}.
\end{align}
\end{dfn}

Since the right-hand side involves only smaller depths, \eqref{eq:tilde-recursion} is a recursion on depth.

\begin{thm}\label{thm:tilde-regular}
The recursion \eqref{eq:tilde-recursion} defines a unique normalized power-series bimould.  If \(A\) is polynomial, then \(\widetilde A\) is polynomial.
\end{thm}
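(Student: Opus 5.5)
Since the right-hand side of \eqref{eq:tilde-recursion} involves \(\widetilde A_s\) only for \(s<r\), uniqueness is immediate once \(\widetilde A_r\) is shown to be well defined. Existence and regularity will therefore be proved together by induction on \(r\). The inductive hypothesis is that \(\widetilde A_s\in R[[X_1,\ldots,X_s,Y_1,\ldots,Y_s]]\) for all \(s<r\), and that \(\widetilde A_s\) is a polynomial when \(A\) is. The base case is \(\widetilde A_0=1\).

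For the inductive step, the first two terms of \eqref{eq:tilde-recursion} are products of power series in disjoint sets of variables, since \(A^{(0)}\langle1,i\rangle=A_i(w_1\cdots w_i)\). These terms are therefore power series, and polynomials when \(A\) is polynomial. The only issue is the last sum, where we divide by \(X_i\). Fix \(i\) and set
\[
 F(t):=A^{(t)}\langle1,i-1\rangle\,\widetilde A^{(t)}\langle i+1,r\rangle.
\]
By the inductive hypothesis, \(F(t)\) is a power series in \(t\) and in the variables \(X_j,Y_j\) with \(j\neq i\). The translation \(X_j\mapsto X_j-t\) is a well-defined substitution in formal power series, because it only involves the variables \(X_j\) and \(t\) with no constant term. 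The \(i\)-th summand is then
\[
 \frac{F(0)-F(X_i)}{X_i}.
\]
Since \(F\) does not involve \(X_i\) except through \(t\), we can expand \(F(t)=\sum_{n\ge0}c_nt^n\), where each coefficient \(c_n\) is a power series in the remaining variables. This gives
\[
 \frac{F(0)-F(X_i)}{X_i}=-\sum_{n\ge1}c_nX_i^{n-1},
\]
which is a genuine power series. If \(A\) is polynomial, then so is \(F\), and hence so is this divided difference. Since \(\widetilde A_r\) is a finite sum of such terms, the induction closes.

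The one point requiring care is the convergence of the expansion in \(t\). It is formal: in each fixed total degree only finitely many \(c_n\) contribute, so the infinite sum defines an element of \(R[[X,Y]]\). One could also argue more abstractly. The map \(t\mapsto\) translation is a ring homomorphism \(R[[X,Y]]\to R[[X,Y,t]]\), and \(t\) divides \(G(0)-G(t)\) for every \(G\in R[[\ldots,t]]\). Beyond this check there is no real obstacle.
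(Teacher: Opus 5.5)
Your proposal is correct and follows the paper's proof essentially line for line. The key step is the same: induction on depth, together with the observation that \(X_i\) enters \(F_i(t)\) only through \(t\), so \(F_i(0)-F_i(X_i)\) is divisible by \(X_i\) in the power-series (respectively polynomial) ring. Your explicit expansion \(-\sum_{n\geq1}c_nX_i^{n-1}\) just writes out that divisibility.
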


\begin{proof}
For fixed \(i\), put
\[
 F_i(t)=A^{(t)}\langle1,i-1\rangle
 \widetilde A^{(t)}\langle i+1,r\rangle.
\]
The two blocks omit the index \(i\), so \(X_i\) enters \(F_i(t)\) only through \(t\), and therefore \(F_i(0)-F_i(X_i)\) is divisible by \(X_i\).  Every quotient in \eqref{eq:tilde-recursion} is therefore a formal power series.  Induction on the depth proves existence and uniqueness.  The same argument in the polynomial ring proves polynomiality.
\end{proof}

\begin{dfn}\label{def:diamond}
For normalized power-series bimoulds \(A,B\), define
\begin{align}\label{eq:diamond-direct}
 (A\diamond B)_r
 \bi{X_1,\ldots,X_r}{Y_1,\ldots,Y_r}
 :={}&\sum_{\substack{0\leq j\leq r\\
 0=r_0<n_1\leq r_1<\cdots<n_j\leq r_j\leq r}}
 Q^A_{\boldsymbol n,\boldsymbol r}
 B_j\bi{X_{n_1},\ldots,X_{n_j}}
 {Y_{1:r_1},Y_{r_1+1:r_2},\ldots,Y_{r_{j-1}+1:r_j}},
\end{align}
where
\begin{equation}\label{eq:diamond-left-factor}
 Q^A_{\boldsymbol n,\boldsymbol r}
 =\left(\prod_{i=1}^j
 A^{(X_{n_i})}\langle r_{i-1}+1,n_i-1\rangle
 \widetilde A^{(X_{n_i})}\langle n_i+1,r_i\rangle\right)
 A^{(0)}\langle r_j+1,r\rangle.
\end{equation}
For \(j=0\), the displayed lists and product are empty and \(r_j=r_0=0\).
\end{dfn}

The sum is finite in every depth.  The formula and \cref{thm:tilde-regular} show that \(A\diamond B\) is a power-series bimould and is componentwise polynomial when \(A\) and \(B\) are polynomial.  The following marked-letter form will also be useful.

\begin{lem}\label{lem:translation}
For normalized power-series bimoulds,
\begin{equation}\label{eq:diamond-gaxit}
 A\diamond B=\muop\bigl(\gaxit_{A,\widetilde A}(B),A\bigr).
\end{equation}
\end{lem}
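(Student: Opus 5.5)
This is a direct expansion. The plan is to write out the right-hand side of \eqref{eq:diamond-gaxit} using \eqref{eq:mu} and \eqref{eq:gaxit}, and match it term by term with \eqref{eq:diamond-direct}.

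By \eqref{eq:mu}, the depth-\(r\) component of \(\muop(\gaxit_{A,\widetilde A}(B),A)\) is
\[
 \sum_{m=0}^{r}\gaxit_{A,\widetilde A}(B)(w_1\cdots w_m)\,A(w_{m+1}\cdots w_r).
\]
The right factor is \(A_{r-m}\) evaluated at the last \(r-m\) letters with no translation, so it equals \(A^{(0)}\langle m+1,r\rangle\). We identify \(m\) with \(r_j\).

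For \(m\ge1\), expand \(\gaxit_{A,\widetilde A}(B)(w_1\cdots w_m)\) by \eqref{eq:gaxit}. This gives a sum over \(s=j\geq1\) and over chains \(0=q_0<p_1\le q_1<\cdots<p_j\le q_j=m\). Rename \(p_i=n_i\) and \(q_i=r_i\). Each summand is then
\[
 B_j\bi{X_{n_1},\ldots,X_{n_j}}{Y_{r_0+1:r_1},\ldots,Y_{r_{j-1}+1:r_j}}
 \prod_{i=1}^{j}A^{(X_{n_i})}\langle r_{i-1}+1,n_i-1\rangle\,
 \widetilde A^{(X_{n_i})}\langle n_i+1,r_i\rangle .
\]
This is exactly the factor in \eqref{eq:diamond-left-factor} without its last term. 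Multiplying by \(A^{(0)}\langle r_j+1,r\rangle\) recovers \(Q^A_{\boldsymbol n,\boldsymbol r}B_j(\cdots)\). The constraint \(q_s=m\) fixes \(r_j=m\), so summing over \(m\) from \(1\) to \(r\) runs over all chains with \(j\ge1\) and \(r_j\le r\).

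The term \(m=0\) uses \(\gaxit(B)(\varnothing)=B_0=1\) and contributes \(A_r=A^{(0)}\langle1,r\rangle\). This is precisely the \(j=0\) term of \eqref{eq:diamond-direct}, where \(r_0=0\) and \(B_0=1\).

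Finally, for \(m\geq1\) the sum in \eqref{eq:gaxit} has no \(s=0\) term. So the two indexings are in bijection and the identity holds in every depth.

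The only point needing care, and it is a bookkeeping point rather than a real obstacle, is to check the conventions. First, the empty blocks \(\langle p,p-1\rangle\) are \(1\) in both formulas. Second, the first lower-variable block is \(Y_{1:r_1}\), matching \(Y_{q_0+1:q_1}\) with \(q_0=0\). Third, the upper variables in the \(A\)- and \(\widetilde A\)-blocks are translated by \(X_{n_i}\) in both formulas. No regularity input from \cref{thm:tilde-regular} is needed beyond knowing that \(\widetilde A\) is a normalized power-series bimould, so that both sides are defined.
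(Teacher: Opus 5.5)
Your proposal is correct and follows essentially the same route as the paper: expand the right-hand side with \eqref{eq:mu} and \eqref{eq:gaxit}, rename \(p_i=n_i\) and \(q_i=r_i\), and read off the final \(\muop\)-split as the factor \(A^{(0)}\langle r_j+1,r\rangle\). Your version spells out the bookkeeping the paper leaves implicit, namely the \(m=0\) versus \(j=0\) term and the bijection of the index sets for \(m\geq1\).
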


\begin{proof}
Expand the right-hand side using \eqref{eq:mu} and \eqref{eq:gaxit}.  Write \(n_i=p_i\) and \(r_i=q_i\) for the marked positions and their endpoints.  The final split in \(\muop\) gives the factor \(A^{(0)}\langle r_j+1,r\rangle\).  The remaining factors and lower arguments are exactly those in \eqref{eq:diamond-direct} and \eqref{eq:diamond-left-factor}.
\end{proof}

Let \(V\) be concentrated in depth one with \(V_1\bi{X_1}{Y_1}=1/X_1\), and put \(\jmath=\one-V\).  For \(A\in\BIMU^{\mathrm{loc}}\), the same triangular recursion \eqref{eq:tilde-recursion} defines \(\widetilde A\), since every \(X_i\) is invertible in \(\cL_r(R)\).

\begin{lem}\label{lem:tilde-graph}
Let \(A\) be normalized, either as a power-series bimould or as a localized bimould.  A bimould \(K\in\BIMU^{\mathrm{loc}}\) satisfies
\begin{equation}\label{eq:tilde-graph}
 \gaxit_{A,K}(\jmath)=\muop(A,\jmath,K)
\end{equation}
if and only if \(K=\widetilde A\).
\end{lem}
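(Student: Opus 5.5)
The plan is to expand both sides of \eqref{eq:tilde-graph} depth by depth, and to check that the depth-\(r\) component of the identity is exactly the recursion \eqref{eq:tilde-recursion} with \(\widetilde A\) replaced by \(K\). The equivalence then follows by induction on the depth.

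First, the left-hand side. The bimould \(\jmath\) has only two nonzero components: \(\jmath_0=1\) and \(\jmath_1\bi{X_1}{Y_1}=-1/X_1\). In \eqref{eq:gaxit} the argument \(M(\widehat w_{p_1}\cdots\widehat w_{p_s})\) has depth \(s\geq1\) when \(r\geq1\), so only the terms with \(s=1\) survive. For such a term \(q_1=r\) and \(p_1=p\) is arbitrary with \(1\leq p\leq r\). Since \(\jmath_1\) depends only on the upper variable \(X_p\) of \(\widehat w_p\), the absorbed lower variables play no role, and
\[
 \gaxit_{A,K}(\jmath)_r=-\sum_{p=1}^r\frac{1}{X_p}\,
 A^{(X_p)}\langle1,p-1\rangle K^{(X_p)}\langle p+1,r\rangle .
\]
In depth \(0\) both sides of \eqref{eq:tilde-graph} equal \(1\), because \(A_0=K_0=1\).

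Next, the right-hand side. By \eqref{eq:mu} for a triple product, \(\muop(A,\jmath,K)_r\) is a sum over the ways of splitting \(w_1\cdots w_r\) into three consecutive blocks, with \(\jmath\) evaluated on the middle block. The middle block is either empty or a single letter. Hence
\[
 \muop(A,\jmath,K)_r=\sum_{i=0}^r A^{(0)}\langle1,i\rangle K^{(0)}\langle i+1,r\rangle
 -\sum_{p=1}^r\frac1{X_p}A^{(0)}\langle1,p-1\rangle K^{(0)}\langle p+1,r\rangle .
\]
In the first sum, the term \(i=0\) is \(K_r\) and the term \(i=r\) is \(A_r\). Moving all other terms to the other side, the depth-\(r\) part of \eqref{eq:tilde-graph} becomes
\[
 K_r=-A_r-\sum_{i=1}^{r-1}A^{(0)}\langle1,i\rangle K^{(0)}\langle i+1,r\rangle
 +\sum_{p=1}^r\frac{A^{(0)}\langle1,p-1\rangle K^{(0)}\langle p+1,r\rangle
 -A^{(X_p)}\langle1,p-1\rangle K^{(X_p)}\langle p+1,r\rangle}{X_p}.
\]
This is precisely \eqref{eq:tilde-recursion} with \(\widetilde A\) replaced by \(K\). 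On the right-hand side only components \(K_d\) with \(d<r\) occur.

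Finally, the induction on \(r\). If \(K=\widetilde A\), then \eqref{eq:tilde-recursion} says exactly that the displayed identity holds in every depth, so \eqref{eq:tilde-graph} holds. Conversely, suppose \eqref{eq:tilde-graph} holds. Then \(K_0=1=\widetilde A_0\). If \(K_d=\widetilde A_d\) for all \(d<r\), the displayed identity and \eqref{eq:tilde-recursion} have the same right-hand side, so \(K_r=\widetilde A_r\).

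The argument works verbatim in the localized setting, since \(X_p\) is invertible in \(\cL_r(R)\) and the substitutions \(X_j\mapsto X_j-X_p\) preserve \(\cL_r(R)\). In the power-series setting, \cref{thm:tilde-regular} guarantees that \(\widetilde A\) exists as a power-series bimould, and hence as an element of \(\BIMU^{\mathrm{loc}}\).

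The only real obstacle is bookkeeping: one has to check that the translations in \eqref{eq:gaxit} are by \(X_p\), and that the \(\muop\)-terms carry translation \(0\), so that the two families of quotient terms combine into the divided differences of \eqref{eq:tilde-recursion} with the correct sign.
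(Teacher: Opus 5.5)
Your proof is correct and is essentially the paper's argument. You observe that only one marked letter contributes to \(\gaxit_{A,K}(\jmath)\), expand \(\muop(A,\jmath,K)\) by splitting into three blocks, and note that the depth-\(r\) identity is exactly \eqref{eq:tilde-recursion} solved for \(K_r\) in terms of lower depths, so uniqueness (your induction) finishes. Your checks of depth zero and of the localized setting only spell out what the paper's short proof leaves implicit.
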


\begin{proof}
Only decompositions with one marked letter contribute to \(\gaxit_{A,K}(V)\), and
\[
 \gaxit_{A,K}(V)(w_1\cdots w_n)
 =\sum_{i=1}^n
 \frac{A^{(X_i)}\langle1,i-1\rangle
 K^{(X_i)}\langle i+1,n\rangle}{X_i}.
\]
On the other hand,
\[
 \muop(A,\jmath,K)(w_1\cdots w_n)
 =\sum_{i=0}^nA^{(0)}\langle1,i\rangle
 K^{(0)}\langle i+1,n\rangle
 -\sum_{i=1}^nA^{(0)}\langle1,i-1\rangle
 \frac{K^{(0)}\langle i+1,n\rangle}{X_i}.
\]
Substituting these formulas in \eqref{eq:tilde-graph} and solving for \(K_n\) gives \eqref{eq:tilde-recursion}.  Uniqueness finishes the proof.
\end{proof}

\begin{thm}\label{thm:diamond-associative}
The operation \(\diamond\) is associative on normalized power-series bimoulds and has unit \(\one\).
\end{thm}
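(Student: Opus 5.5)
We would prove \cref{thm:diamond-associative} by recognizing \(\diamond\) as the GAXI product restricted to the "graph" \(\{(A,\widetilde A)\}\), using the composition law \eqref{eq:gaxi}. By \cref{lem:translation}, \(A\diamond B=\muop(\gaxit_{A,\widetilde A}(B),A)\), which is exactly the first component \(E\) of \(\gaxi((B,\widetilde B),(A,\widetilde A))\). The second component is \(F=\muop(\widetilde A,\gaxit_{A,\widetilde A}(\widetilde B))\). So the key claim is \(F=\widetilde{A\diamond B}\). Granting it, the map \(A\mapsto(A,\widetilde A)\) is injective and its image is closed under GAXI. Since GAXI is associative (composition of operators, via \eqref{eq:gaxi}), associativity of \(\diamond\) follows by comparing first components. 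We would also check injectivity of \(B\mapsto\gaxit_{B,C}\) on the relevant inputs, or simply read off associativity from the first component directly.

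To prove \(F=\widetilde{A\diamond B}\), we use \cref{lem:tilde-graph}, working in \(\BIMU^{\mathrm{loc}}\) where the lemma applies. It suffices to verify
\[
\gaxit_{E,F}(\jmath)=\muop(E,\jmath,F).
\]
Using \eqref{eq:gaxi}, the left side equals \(\gaxit_{A,\widetilde A}\bigl(\gaxit_{B,\widetilde B}(\jmath)\bigr)\). By \cref{lem:tilde-graph} for \(B\), this is \(\gaxit_{A,\widetilde A}\bigl(\muop(B,\jmath,\widetilde B)\bigr)\). By multiplicativity of \(\gaxit\) for \(\muop\), this becomes
\[
\muop\bigl(\gaxit_{A,\widetilde A}(B),\gaxit_{A,\widetilde A}(\jmath),\gaxit_{A,\widetilde A}(\widetilde B)\bigr).
\]
Applying \cref{lem:tilde-graph} again for \(A\) to the middle factor gives
\[
\muop\bigl(\gaxit_{A,\widetilde A}(B),A,\jmath,\widetilde A,\gaxit_{A,\widetilde A}(\widetilde B)\bigr)=\muop(E,\jmath,F),
\]
by associativity of \(\muop\). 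Uniqueness in \cref{lem:tilde-graph} then yields \(F=\widetilde E\). The identity holds in the localized ring, and both sides are power series, so it holds for power series too.

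With \(\widetilde{A\diamond B}=F\), the composition law gives
\[
\gaxit_{A\diamond B,\widetilde{A\diamond B}}=\gaxit_{A,\widetilde A}\circ\gaxit_{B,\widetilde B}.
\]
Hence
\[
(A\diamond B)\diamond C=\muop\bigl(\gaxit_{A,\widetilde A}\gaxit_{B,\widetilde B}(C),\,\gaxit_{A,\widetilde A}(B),\,A\bigr),
\]
using multiplicativity. On the other hand,
\[
A\diamond(B\diamond C)=\muop\bigl(\gaxit_{A,\widetilde A}(\muop(\gaxit_{B,\widetilde B}(C),B)),A\bigr),
\]
and multiplicativity shows the two expressions agree.

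For the unit, we check \(\widetilde{\one}=\one\) from the recursion. In \eqref{eq:tilde-recursion} with \(A=\one\), only the \(i=1\) quotient survives, and it vanishes by induction on depth. Then \(\gaxit_{\one,\one}=\mathrm{id}\) gives \(\one\diamond B=B\). Also, \(\gaxit_{A,\widetilde A}(\one)=\one\) (only the depth-zero term contributes) gives \(A\diamond\one=A\).

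The main obstacle is bookkeeping in the localization. \cref{lem:tilde-graph} and the multiplicativity and composition laws must be applied in \(\BIMU^{\mathrm{loc}}\) with \(\jmath\) having poles, so the flexion translates of \(\jmath\) must be compatible with the pole ring \(\cL_r\). We would also confirm that the cited multiplicativity holds with arbitrary normalized \((B,C)\), not only for pairs \((Q,\invmu(Q))\).
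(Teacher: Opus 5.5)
Your proposal is correct and is essentially the paper's proof. Like the paper, you show that the graph \(A\mapsto(A,\widetilde A)\) is closed under GAXI, with first component \(A\diamond B\). You do this by applying the composed operator to \(\jmath\), using multiplicativity, and invoking the uniqueness in \cref{lem:tilde-graph}. The only differences are cosmetic: you unwind associativity directly via multiplicativity instead of citing GAXI associativity and injectivity of \(\Gamma\), and you check the unit through \(\widetilde{\one}=\one\) and \(\gaxit_{\one,\one}=\operatorname{id}\) rather than reading it off \eqref{eq:diamond-direct}.
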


\begin{proof}
The unit follows directly from \eqref{eq:diamond-direct}.  Put \(\Gamma(A)=(A,\widetilde A)\).  Multiplicativity and \eqref{eq:gaxi}, together with \cref{lem:translation,lem:tilde-graph}, give
\begin{equation}\label{eq:graph-product}
 \gaxi(\Gamma(A),\Gamma(B))=\Gamma(B\diamond A).
\end{equation}
To see this, we apply the operator corresponding to the left-hand pair to \(\jmath\) and get
\begin{align*}
 \gaxit_{B,\widetilde B}
 \bigl(\gaxit_{A,\widetilde A}(\jmath)\bigr)
 &=\gaxit_{B,\widetilde B}\bigl(\muop(A,\jmath,\widetilde A)\bigr)\\
 &=\muop\bigl(\gaxit_{B,\widetilde B}(A),
 \muop(B,\jmath,\widetilde B),
 \gaxit_{B,\widetilde B}(\widetilde A)\bigr).
\end{align*}
Write the GAXI product as \((C,L)\).  Its first component is \(C=B\diamond A\), and the last display is \(\muop(C,\jmath,L)\).  By \cref{lem:tilde-graph}, we have \(L=\widetilde C\), which proves \eqref{eq:graph-product}.  Associativity now follows from associativity of GAXI and injectivity of \(\Gamma\).
\end{proof}

In depth \(r\), the direct formula has the form
\[
 (A\diamond B)_r=A_r+B_r
 +\text{terms involving only components of depths below }r.
\]
Hence \(A\diamond B=\one\) determines a right inverse \(B\) recursively.  If \(B\diamond C=\one\), then associativity gives \(A=A\diamond(B\diamond C)=(A\diamond B)\diamond C=C\).  This gives \(B\diamond A=\one\), and \(\diamond\) is a group law.  The depth-one formula is
\begin{equation}\label{eq:diamond-depth-one}
 (A\diamond B)_1=A_1+B_1.
\end{equation}

\section{Conjugation with GARI}\label{sec:conjugation}

Besides \eqref{eq:T} we will use the identity
\begin{equation}\label{eq:flexion-unit-identities}
 T(\poc)=\invmu(\pic)=\jmath.
\end{equation}
To see this, apply \eqref{eq:gaxi} to \(\ganit_{\pic}\circ\ganit_{\poc}=\gaxit_{\one,\pic}\circ\gaxit_{\one,\poc}\).  The resulting pair is \(\bigl(\one,\muop(\pic,T(\poc))\bigr)\), so that \(\ganit_{\pic}\circ\ganit_{\poc}=\ganit_{\muop(\pic,T(\poc))}\).  By \eqref{eq:T} this composition is the identity, and a bimould \(F\) with \(\ganit_F=\operatorname{id}\) satisfies \(F=\one\), as one sees by applying \(\ganit_F\) to a bimould concentrated in depth one.  We obtain \(\muop(\pic,T(\poc))=\one\), which gives \(T(\poc)=\invmu(\pic)\).  A direct computation with \eqref{eq:pic-poc} gives \(\muop(\jmath,\pic)=\one\), and therefore \(\invmu(\pic)=\jmath\).

\begin{prop}\label{prop:diamond-conjugation}
For a normalized \(A\) with poles, put \(Q=T^{-1}(A)\).  Then
\begin{equation}\label{eq:garit-conjugation}
 T\circ\garit_Q\circ T^{-1}=\gaxit_{A,\widetilde A}.
\end{equation}
Therefore, for normalized power-series bimoulds \(A,B\),
\begin{equation}\label{eq:diamond-gari}
 A\diamond B=T\bigl(\gari(T^{-1}B,T^{-1}A)\bigr).
\end{equation}
The order on the right is reversed.
\end{prop}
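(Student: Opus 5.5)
We would prove \eqref{eq:garit-conjugation} entirely inside the GAXI group, using the composition law \eqref{eq:gaxi} together with the characterization of \(\widetilde A\) in \cref{lem:tilde-graph}. Since \(T=\ganit_{\pic}=\gaxit_{\one,\pic}\), \(T^{-1}=\gaxit_{\one,\poc}\) and \(\garit_Q=\gaxit_{Q,\invmu(Q)}\), the composite \(T\circ\garit_Q\circ T^{-1}\) is itself of the form \(\gaxit_{E,F}\) for an explicit pair \((E,F)\) obtained by applying \eqref{eq:gaxi} twice. The plan is to show \(E=A\) and \(F=\widetilde A\).

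First compose \(\garit_Q\circ T^{-1}=\gaxit_{Q,\invmu(Q)}\circ\gaxit_{\one,\poc}\). By \eqref{eq:gaxi} this is \(\gaxit_{E',F'}\) with \(E'=\muop(\garit_Q(\one),Q)=Q\) (since \(\gaxit\) fixes \(\one\)) and \(F'=\muop(\invmu(Q),\garit_Q(\poc))\). Then compose with \(T=\gaxit_{\one,\pic}\) on the left: \(E=\muop(T(Q),\one)=T(Q)=A\), and \(F=\muop(\pic,T(F'))\). So the first component is immediate, and all the work is in the second.

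For the second component I would avoid computing \(F\) directly and instead verify the defining identity \eqref{eq:tilde-graph}: by \cref{lem:tilde-graph} it suffices to show \(\gaxit_{A,F}(\jmath)=\muop(A,\jmath,F)\). Using \eqref{eq:flexion-unit-identities}, \(\jmath=T(\poc)\), so
\[
\gaxit_{A,F}(\jmath)=T\bigl(\garit_Q(T^{-1}T(\poc))\bigr)=T(\garit_Q(\poc)).
\]
On the other side, use multiplicativity of \(T\) for \(\muop\): \(F=\muop(\pic,T(\invmu(Q)),T(\garit_Q(\poc)))\), and \(\muop(A,\jmath,F)=\muop(T(Q),\invmu(\pic),\pic,T(\invmu Q),T(\garit_Q\poc))\). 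The middle factors cancel (\(\jmath=\invmu(\pic)\)), and \(T(Q)\) times \(T(\invmu Q)\) equals \(T(\muop(Q,\invmu Q))=\one\), leaving exactly \(T(\garit_Q(\poc))\). This proves \(F=\widetilde A\), hence \eqref{eq:garit-conjugation}. The main obstacle is bookkeeping: confirming that multiplicativity of \(\gaxit\) and \cref{lem:tilde-graph} are valid in the localized setting \(\BIMU^{\mathrm{loc}}\) (the lemma is stated there, and multiplicativity is formal), and that \(\gaxit_{B,C}(\one)=\one\).

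Finally, \eqref{eq:diamond-gari}: by \cref{lem:translation} and \eqref{eq:garit-conjugation},
\[
A\diamond B=\muop\bigl(T\garit_Q T^{-1}(B),A\bigr)=\muop\bigl(T(\garit_Q(T^{-1}B)),T(Q)\bigr)=T\bigl(\muop(\garit_Q(T^{-1}B),Q)\bigr),
\]
again by multiplicativity of \(T\), and the last expression is \(T(\gari(T^{-1}B,Q))=T(\gari(T^{-1}B,T^{-1}A))\), with the stated reversal of order.
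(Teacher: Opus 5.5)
Your proposal is correct and follows essentially the same route as the paper: obtain the pair for the composite from the GAXI composition law, read off the first component as \(A\), identify the second component with \(\widetilde A\) by checking \eqref{eq:tilde-graph} via \(\jmath=T(\poc)=\invmu(\pic)\) and multiplicativity, and then deduce \eqref{eq:diamond-gari} from \cref{lem:translation}. Your second component \(\muop(\pic,T(\invmu(Q)),T(\garit_Q(\poc)))\) is exactly the paper's \(K\), since multiplicativity gives \(T(\invmu(Q))=\invmu(A)\).
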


\begin{proof}
The GAXI composition law shows that the operator on the left of \eqref{eq:garit-conjugation} is \(\gaxit_{A,K}\), where
\[
 K=\muop\bigl(\pic,\invmu(A),T(\garit_Q(\poc))\bigr).
\]
Using \eqref{eq:flexion-unit-identities}, we obtain
\begin{align*}
 \gaxit_{A,K}(\jmath)
 &=T\bigl(\garit_Q(T^{-1}(\jmath))\bigr)
 =T\bigl(\garit_Q(\poc)\bigr)\\
 &=\muop\bigl(A,\invmu(A),T(\garit_Q(\poc))\bigr)
 =\muop(A,\jmath,K).
\end{align*}
The last equality uses \(\muop(\jmath,\pic)=\one\).  By \cref{lem:tilde-graph}, we have \(K=\widetilde A\), which proves \eqref{eq:garit-conjugation}.  Together with \cref{lem:translation}, this gives
\begin{align*}
 A\diamond B
 &=\muop\bigl(T(\garit_Q(T^{-1}B)),T(Q)\bigr)\\
 &=T\bigl(\muop(\garit_Q(T^{-1}B),Q)\bigr),
\end{align*}
which is \eqref{eq:diamond-gari}.
\end{proof}

The conjugation identity also gives the following result, which is used in \cite{BB2}.

\begin{thm}\label{thm:diamond-symmetril}
If \(A\) and \(B\) are normalized symmetril bimoulds, then \(A\diamond B\) is symmetril.
\end{thm}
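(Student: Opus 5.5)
We plan to transport the statement to the alternal/symmetral side via \(T\), using \eqref{eq:diamond-gari}. By \eqref{eq:diamond-gari} we have
\[
 A\diamond B=T\bigl(\gari(T^{-1}B,T^{-1}A)\bigr).
\]
So it suffices to know three facts.
\begin{enumerate}[(a)]
\item \(T^{-1}=\ganit_{\poc}\) maps normalized symmetril bimoulds (with poles) to normalized symmetral bimoulds.
\item \(\gari\) preserves symmetrality.
\item \(T=\ganit_{\pic}\) maps symmetral bimoulds back to symmetril ones.
\end{enumerate}
Fact (b) is classical (\'Ecalle; see \cite{Sc}): the symmetral bimoulds form a subgroup of GARI, whose Lie algebra is the alternal bimoulds under ari. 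We will quote it, noting that the proof is formal and therefore also valid in \(\BIMU^{\mathrm{loc}}\).

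For (a) and (c), we use the group-level version of the transport stated after \eqref{eq:T}: \(\ganit_{\pic}\) restricts to a bijection between symmetral and symmetril normalized bimoulds, with inverse \(\ganit_{\poc}\). This is the multiplicative counterpart of the alternal/alternil transport of Komiyama and \cite{Bu1}*{Proposition~5.9}. If only the Lie version is available, we would deduce the group version as follows.

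\begin{itemize}
\item By the multiplicativity of \(\gaxit\) for \(\muop\) \cite{Kaw1}*{Proposition~3.12}, \(T\) is a \(\muop\)-automorphism of \(\BIMU^{\mathrm{loc}}\).
\item Symmetral bimoulds are exactly the \(\muop\)-exponentials \(\exp_{\muop}(a)\) of alternal \(a\) (shuffle Hopf algebra, group-like versus primitive). Likewise, symmetril bimoulds are the \(\muop\)-exponentials of alternil bimoulds, using the bi-stuffle Hopf structure (flexed version, \cite{Bu1}*{Remarks~C.15, C.18}).
\item Since \(T\) commutes with \(\exp_{\muop}\), being a \(\muop\)-algebra automorphism that is continuous for the depth filtration, the Lie-level transport gives the group-level transport.
\end{itemize}

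The main obstacle is the symmetril/alternil exponential correspondence in the presence of poles. The flexed bi-stuffle product is no longer literally a coefficient-level Hopf algebra, so the identification "symmetril \(=\exp_{\muop}(\text{alternil})\)" needs justification. We would first try to cite it from \cite{Bu1}, Appendix C. Failing that, we would argue via the standard characterization: \(M\) is symmetril iff \(\Delta_*(M)=M\otimes M\), for the appropriate flexed coproduct on the bimould side. This coproduct is multiplicative for \(\muop\), which makes \(\log_{\muop}\) map group-like elements to primitive ones.

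Finally, since \(A,B\) are power-series bimoulds, \(A\diamond B\) is a power series by \cref{def:diamond}. Symmetrility in the localized sense then coincides with the coefficient-level definition, which completes the proof.
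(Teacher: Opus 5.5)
Your proposal is correct and is essentially the paper's proof: rewrite \(A\diamond B\) via \eqref{eq:diamond-gari}, use the symmetral--symmetril transport for \(T\) and \(T^{-1}\) and the fact that normalized symmetral bimoulds form a GARI subgroup, note that these formal identities survive base change to the localization, and finish with the power-series remark. The paper quotes the group-level transport directly as \cite{Ko}*{Theorem~3.24}, which is not the alternal--alternil statement after \eqref{eq:T}, so your fallback via \(\muop\)-exponentials and a flexed bi-stuffle coproduct is not needed.
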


\begin{proof}
The symmetral--symmetril transport for \(T\) and its inverse is \cite{Ko}*{Theorem~3.24}.  Normalized symmetral bimoulds form a GARI subgroup by \cite{Ko}*{Theorem~A.7}.  The proofs of both results use only finitely many shuffle and flexion identities in each depth.  These identities remain valid after base change to the localization \eqref{eq:poles}.  Therefore, the right-hand side of \eqref{eq:diamond-gari} is symmetril.  The formula \eqref{eq:diamond-direct} shows that if $A$ and $B$ are power-series bimoulds, then \(A\diamond B\) is a power-series bimould as well.
\end{proof}

\section{Polynomial closure}\label{sec:polynomial}

The definition \eqref{eq:uri} of the uri bracket contains poles, even for polynomial inputs.  The direct formula for \(\diamond\) removes them.  We put
\[
 \BARIpol
 :=\{a\in\BARI\mid a_r\text{ is polynomial for every }r
 \text{ and }a_r=0\text{ for all but finitely many }r\},
\]
and we write \(\mathfrak B\) for the space of elements of \(\BARIswapil\) which lie in \(\BARIpol\), with \(R=\QQ\).

\begin{thm}\label{thm:regularity}
The uri bracket maps power-series bimoulds to power-series bimoulds in every depth and componentwise polynomial bimoulds to componentwise polynomial bimoulds, and it preserves \(\BARIpol\).
\end{thm}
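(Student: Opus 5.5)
The strategy is to write \(\uri\) as the infinitesimal commutator of the group law \(\diamond\), whose direct formula \eqref{eq:diamond-direct} involves no poles. By \eqref{eq:diamond-gari}, for normalized power-series bimoulds we have \(A\diamond B=T(\gari(T^{-1}B,T^{-1}A))\). Fix \(a,b\in\BARI\) (power series) and work over \(R[\eps,\eta]/(\eps^2,\eta^2)\). Put \(A=\one+\eps a\) and \(B=\one+\eta b\); these are normalized power-series bimoulds over this base ring. Since \(T\) is \(R\)-linear on the positive-depth part and fixes \(\one\), we get \(T^{-1}(\one+\eps a)=\one+\eps T^{-1}a\). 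Taking the \(\eps\eta\)-coefficient and using \eqref{eq:gari-tangent} gives
\[
 [\eps\eta]\bigl(A\diamond B-B\diamond A\bigr)
 =T\bigl([\eps\eta](\gari(\one+\eta T^{-1}b,\one+\eps T^{-1}a)-\gari(\one+\eps T^{-1}a,\one+\eta T^{-1}b))\bigr)
 =-\uri(a,b).
\]
The sign is irrelevant. So \(\uri(a,b)\) equals, up to sign, the \(\eps\eta\)-coefficient of the \(\diamond\)-commutator of \(\one+\eps a\) and \(\one+\eta b\). For this I must check that \(T\) and the identities of \Cref{sec:conjugation} are compatible with base change to dual numbers. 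This holds because everything is defined by universal polynomial formulas in the components, and the localization \eqref{eq:poles} commutes with extending \(R\).

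Next I would read off regularity from \eqref{eq:diamond-direct}. By \cref{thm:tilde-regular}, \(\widetilde A\) is a power-series bimould, and it is polynomial if \(A\) is. Hence \(A\diamond B\) is a power-series bimould in every depth, and componentwise polynomial when \(a,b\) are. Taking a coefficient in \(\eps,\eta\) preserves these properties, which gives the first two claims. One can also make the formula explicit. Since \(\widetilde{\one+\eps a}=\one+\eps\tilde a\) with \(\tilde a\) given by the linearized recursion \eqref{eq:tilde-recursion}, the \(\eps\eta\)-part of \(A\diamond B\) collects the terms of \eqref{eq:diamond-direct} that use \(b\) once, as \(B_j\), and \(a\) or \(\tilde a\) once among the factors of \(Q^A\). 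The divided differences \((F(0)-F(X_i))/X_i\) are the only quotients, so no poles appear.

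The remaining point, which I expect to be the main obstacle, is finite depth. Suppose \(a_r=0\) for \(r>d_a\) and \(b_r=0\) for \(r>d_b\). In the \(\eps\eta\)-term, \(B_j\) contributes \(b_j\), so \(j\le d_b\). Exactly one factor in \(Q^A\) is a nontrivial component of \(a\) or \(\tilde a\); every other factor is the depth-zero \(1\), which forces its block to be empty. The blocks \(\langle r_{i-1}+1,n_i-1\rangle\) and \(\langle r_j+1,r\rangle\) then need length at most \(d_a\). The difficulty is that \(\tilde a\) need not have finite depth: the linearized recursion gives \(\tilde a_r=-a_r+\sum_i(a^{(0)}\langle1,i-1\rangle-a^{(X_i)}\langle1,i-1\rangle)/X_i\) plus terms with \(\tilde a\) in the tail. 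Looking at the first-order terms, only one factor is nontrivial, so the term \(\tilde a\langle i+1,r\rangle\) appears with trivial prefix at \(t=0\) and \(t=X_i\), and those contributions cancel. This leaves \(\tilde a_r=-a_r+\sum_{i}(a_{i-1}(X_1,\dots)-a_{i-1}(X_1-X_i,\dots))/X_i\). That requires \(i-1\le d_a\), so \(\tilde a_r=0\) for \(r>d_a+1\). Consequently every term in the \(\eps\eta\)-coefficient has \(r\le j+d_a+1\le d_a+d_b+1\), and \(\uri(a,b)\in\BARIpol\). I would double-check this cancellation carefully, since it is the step most likely to hide an error.
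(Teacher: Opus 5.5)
Your first part (working over $R[\eps,\eta]/(\eps^2,\eta^2)$, identifying $-\uri(a,b)$ with $[\eps\eta](A\diamond B-B\diamond A)$ via \eqref{eq:diamond-gari} and \eqref{eq:gari-tangent}, and reading off power-series and polynomial regularity from \eqref{eq:diamond-direct} and \cref{thm:tilde-regular}) is exactly the paper's argument and is fine. The gap is in the finite-depth step, because the cancellation you propose does not occur. At first order only $i=1$ (empty prefix) and $i=r$ (empty tail) survive in the divided-difference sum of \eqref{eq:tilde-recursion}. The $i=1$ term is $\bigl(\tilde a_{r-1}(X_2,\ldots,X_r)-\tilde a_{r-1}(X_2-X_1,\ldots,X_r-X_1)\bigr)/X_1$, with the lower variables suppressed. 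The prefix is trivial, but the tail is still translated by $t$, so this term vanishes only when $\tilde a_{r-1}$ is translation invariant. The correct linearization is \eqref{eq:lambda-preuri}.

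For instance, $a=(X_1^3,0,0,\ldots)$ gives $\tilde a_1=-X_1^3$, $\tilde a_2=2X_1^2-2X_2^2$ and $\tilde a_3=4(X_2-X_3)\neq0$. This contradicts your claim that $\tilde a_r=0$ for $r>d_a+1=2$. The resulting bound on $\uri$ is false as well. Take $b=(1,0,0,\ldots)$, so that $\tilde b=(-1,0,0,\ldots)$ and $[\eps\eta](B\diamond A)$ is concentrated in depth $2$. The term $n_1=1$, $r_1=r=4$ of \eqref{eq:diamond-direct} gives $[\eps\eta](A\diamond B)_4=4(X_3-X_4)$. Hence $\uri(a,b)_4=-4(X_3-X_4)\neq0$, although $d_a+d_b+1=3$.

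So no bound in terms of the depths alone holds, and polynomiality has to enter this step. Your argument never uses it, even though the theorem asserts finite depth only on $\BARIpol$. The paper uses the weight grading instead. A monomial of depth $r$ and degree $d$ gets weight $r+d$. Substitutions preserve degree, products add depths and degrees, and each divided difference raises the depth by one and lowers the degree by one. Hence for weight-homogeneous $a,b$ of weights $k,l$ the bracket is homogeneous of weight $k+l$ and vanishes in depths $r>k+l$. Every element of $\BARIpol$ is a finite sum of weight-homogeneous ones, which gives the claim.

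Within your framework the repair uses the same idea. For $r>d_a+1$ the linearized recursion reduces to the single divided difference above. That operation lowers the $X$-degree by one and kills terms that are constant in the $X$-variables. So $\tilde a_r=0$ once $r$ exceeds $d_a+1$ plus the maximal $X$-degree of $a$, and your counting of the $\eps\eta$-terms then gives finite depth.
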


\begin{proof}
Let
\[
 S=R[\eps,\eta]/(\eps^2,\eta^2),\qquad
 A=\one+\eps a,\qquad B=\one+\eta b.
\]
The operation \(\diamond\) is defined over \(S\).  The mixed coefficients of \(A\diamond B\) and \(B\diamond A\) are power series, and they are polynomials whenever the components of \(a\) and \(b\) are polynomials.

Equation \eqref{eq:diamond-gari} and the definition \eqref{eq:gari-tangent} give
\begin{equation}\label{eq:tangent-uri}
 [\eps\eta]\bigl(A\diamond B-B\diamond A\bigr)
 =-T\bigl(\ari(T^{-1}a,T^{-1}b)\bigr)
 =-\uri(a,b).
\end{equation}
This proves power-series and componentwise polynomial closure.

For the last statement, give a monomial in depth \(r\) and ordinary degree \(d\) the weight \(r+d\).  All substitutions occurring in \eqref{eq:tilde-recursion} and \eqref{eq:diamond-direct} preserve the ordinary degree, products add depths and degrees, and each divided difference raises the depth by one while lowering the degree by one.  The weight is therefore preserved, and the mixed coefficient has weight equal to the sum of the input weights.  In weight \(k\), a depth- \(r\) component has ordinary degree \(k-r\) and hence vanishes for \(r>k\).  Every finite-depth polynomial bimould is a finite sum of weight-homogeneous bimoulds.  Its bracket therefore has finite depth support.
\end{proof}

\section{Swap invariance}\label{sec:swap-problem}

By \cref{thm:regularity}, it remains to prove swap invariance.

\begin{thm}\label{thm:swap}
If \(a,b\in\BARIswapil\), then
\[
 \swapop\bigl(\uri(a,b)\bigr)=\uri(a,b).
\]
\end{thm}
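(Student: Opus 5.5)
The plan follows the strategy announced in the introduction: transport the swap to the ari side and show it becomes a Lie algebra automorphism there. Put \(\mathscr J=T^{-1}\circ\swapop\circ T\) with \(T=\ganit_{\pic}\). Since \(\uri(a,b)=T(\ari(T^{-1}a,T^{-1}b))\), the claim \(\swapop(\uri(a,b))=\uri(a,b)\) for swap-invariant \(a,b\) is equivalent to the following. Let \(\alpha=T^{-1}a\) and \(\beta=T^{-1}b\), so that \(\mathscr J\alpha=\alpha\) and \(\mathscr J\beta=\beta\). Then we need \(\mathscr J(\ari(\alpha,\beta))=\ari(\alpha,\beta)\). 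So it suffices to prove that \(\mathscr J\) is an automorphism of the ari bracket on a suitable space containing \(T^{-1}(\BARIswapil)\). Its fixed locus is then closed under ari, and applying \(T\) gives the theorem.

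Because swap mixes upper and lower variables, I would first enlarge the localization \(\cL_r\) to allow poles along the swapped hyperplanes as well, i.e.\ along linear forms in the \(Y_i\) such as \(Y_{p:q}\). All flexion operations and \(\swapop\) must preserve the enlarged ring, so that \(\mathscr J\) is defined on the relevant bimoulds.

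Next I would identify \(\mathscr J\) structurally. On alternal bimoulds \(T\) lands in alternil ones, and swap exchanges "alternil" with a swapped condition. The natural framework is the dimorphic one: alternal bimoulds whose swap is alternil for the flexion unit \(1/u+1/v\). In this setting the second author's transport isomorphisms \cite{Kaw2} give isomorphisms of ari Lie algebras between the relevant bialternal-type spaces. I would write \(\mathscr J\) as a composite of such transports with the plain swap, each of which respects \(\ari\) on the appropriate dimorphic subspace. Concretely, the goal is an identity \(\swapop\circ T=T'\circ\swapop\) on this subspace, where \(T'\) is the flexion-unit transport associated with the swapped unit and is known to intertwine \(\ari\) with the corresponding ari. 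Combining this with Écalle's theorem that \(\swapop\) is an ari/iwa-type isomorphism on bialternal objects shows that \(\mathscr J\) preserves \(\ari\). I would also check that \(T^{-1}(\BARIswapil)\) lies in this dimorphic ari subalgebra: its elements are alternal by \(T^{-1}\), and their swaps satisfy the required alternility because \(a\) is alternil and swap-invariant. The depth-one parity condition is handled by \eqref{eq:diamond-depth-one} and the evenness of \(a_1\).

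The main obstacle is exactly this intertwining: showing that \(T^{-1}\circ\swapop\circ T\) respects ari, rather than merely mapping the space to itself. My first attempt would be at the group level. I would show that \(\mathscr J\) is a GARI homomorphism on the dimorphic symmetral subgroup, using that \(\pic\) and \(\poc\) are swap-related to the symmetric unit \(1/u+1/v\) through \eqref{eq:flexion-unit-identities} and the transports of \cite{Kaw2}. I would then differentiate via \eqref{eq:gari-tangent} to get the Lie statement. The bookkeeping of the extra poles is the secondary difficulty. It is resolved at the end because \cref{thm:regularity} already guarantees that \(\uri(a,b)\) is a power series, so an identity holding in the larger localization also holds in \(\BARI\).
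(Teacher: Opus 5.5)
You have the paper's reduction right: show that \(\mathscr J=T^{-1}\circ\swapop\circ T\) is an ari automorphism of a space containing \(T^{-1}(\BARIswapil)\), work in a localization with poles along all hyperplanes and in the dimorphic space attached to \(E(u,v)=1/u+1/v\), and get regularity from \cref{thm:regularity}. But there is a genuine gap. The step that carries all the content, namely that \(\mathscr J\) respects \(\ari\), is never supplied, and the concrete mechanism you propose cannot supply it.

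The identity \(\swapop\circ T=T'\circ\swapop\) is true with \(T'=\ganit_{\pac}^{-1}\). It follows from \(\swapop\circ\ganit_C=\ganit_{\asna(C)}\circ\swapop\) and \(\asna(\poc)=\pac\). However, it holds for all bimoulds and only rewrites \(\mathscr J\) as \(\ganit_{\cP}^{-1}\circ\swapop\). By the definition of uri, ``\(\mathscr J\) respects \(\ari\) on \(T^{-1}(\BARIswapil)\)'' is literally ``\(\swapop\) respects \(\uri\) on \(\BARIswapil\)'', i.e.\ the theorem itself. Since neither \(T\) nor \(\ganit_{\pac}\) is an ari morphism, rewriting \(\mathscr J\) through them gives no new information.

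Nor can you then appeal to the compatibility of swap with ari. The identity \(\swapop(\ari(A,B))=\ari(\swapop A,\swapop B)\) needs push invariance. But \(T^{-1}(a)\) lies in \(\cD_E\), not in \(\Lbial\), so it is in general neither bialternal nor push invariant. Your group-level fallback is stated as a hope, without a mechanism.

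The missing idea is to conjugate by the adjoint actions of the secondary bimoulds of \(E\), namely \(H_+=\adari(\mathfrak{ess})\) and \(H_-=\adari(\ddot{\mathfrak e}\mathfrak{ss})\).
\begin{enumerate}[(i)]
\item Because \(E\) equals its own conjugate unit, the second fundamental identity of \cite{Kaw2} reads \(\swapop\circ H_+=\ganit_{\cP}\circ H_-\circ\swapop\) on push-invariant bimoulds. Hence \(\mathscr J\circ H_+=H_-\circ\swapop\) on \(\Lbial\).
\item To transfer this to all of \(\cD_E\), where \(T^{-1}(a)\) actually lives, you need \(H_+:\Lbial\to\cD_E\) to be bijective. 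The paper proves surjectivity by a leading-term argument. It uses that \(H_\pm\) and \(\ganit_{\cP}^{\pm1}\) induce the identity on the depth-graded, so the lowest-depth component of any element of \(\cD_E\) is bialternal.
\item Only then does \(\mathscr J=H_-\circ\swapop\circ H_+^{-1}\) hold on \(\cD_E\). This composite is an ari automorphism because swap is one on \(\Lbial\).
\end{enumerate}
Finally, the inclusion \(T^{-1}(\BARIswapil)\subseteq\cD_E\) does not follow merely from \(a\) being alternil and swap invariant. It needs the factorization \(\ganit_{\cP}=\ganit_{\pac}\circ T\), which is where \(\cP_r=\prod_i(1/X_i+1/Y_i)\) enters.
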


Burmester constructed a uri Lie subalgebra of the polynomial elements of \(\BARIswapil\) in \cite{Bu1}*{Theorem~5.20}.  Our proof uses the dimorphic transports of the second author \cite{Kaw2} and the bialternal identities given in \cite{Sc}.

\subsection{A fixed-point formulation}

The symmetric flexion unit
\[
 E(u,v)=\frac1u+\frac1v
\]
has product bimould
\[
 \cP_r\bi{X_1,\ldots,X_r}{Y_1,\ldots,Y_r}
 =\prod_{i=1}^r\left(\frac1{Y_i}+\frac1{X_i}\right),
 \qquad \cP_0=1.
\]
The unit \(E\) is not the standard flexion unit \(1/X\) defining symmetrility and alternility.  In \'{E}calle's notation, \(E\) is the polar unit~\(\operatorname{Pai}_{1,1}\) and \(\cP\) is the bimould \(\operatorname{paic}_{1,1}\) \cite{Ec2}*{Sections~3.2 and~3.7}.  Since \(\cP\) has poles along \(Y_i=0\), we work in the larger ring
\[
 \widehat{\cL}_r(R)
 :=R[[X_1,\ldots,X_r,Y_1,\ldots,Y_r]]
   \Bigl[\tfrac1\ell\ \Big|\ \ell\text{ a nonzero }\QQ\text{-linear form
   in the }X_i,Y_i\Bigr],
\]
in which poles along every hyperplane through the origin are allowed.  It contains \(\cL_r(R)\) and \(\cP\).  All bimoulds in this section have their depth-\(r\) components in \(\widehat{\cL}_r(R)\).

For a bimould \(A\), define
\begin{align*}
 \antiop(A)(w_1\cdots w_r)&=A(w_r\cdots w_1),\\
 \negop(A)_r\bi{X_1,\ldots,X_r}{Y_1,\ldots,Y_r}
 &=A_r\bi{-X_1,\ldots,-X_r}{-Y_1,\ldots,-Y_r}.
\end{align*}

Now define
\begin{equation}\label{eq:fixed-map}
 \mathscr J(Q)=\ganit_{\cP}^{-1}\bigl(\swapop(Q)\bigr).
\end{equation}
Then swap invariance becomes the following fixed-point condition.

\begin{lem}\label{lem:swap-ganit}
Let \(B\) be alternil and put \(Q=T^{-1}(B)\).  Then
\begin{equation}\label{eq:swap-ganit}
 \swapop(B)=B
 \quad\Longleftrightarrow\quad
 \swapop(Q)=\ganit_{\cP}(Q)
 \quad\Longleftrightarrow\quad
 \mathscr J(Q)=Q.
\end{equation}
\end{lem}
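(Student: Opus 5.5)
The last equivalence in \eqref{eq:swap-ganit} is formal. The operator \(\ganit_{\cP}=\gaxit_{\one,\cP}\) is depth-triangular: in depth \(r\) it equals the identity plus terms built from components of depth below \(r\). It is therefore invertible on bimoulds with components in \(\widehat{\cL}_r(R)\), and the inverse is computed recursively in depth. Applying \(\ganit_{\cP}^{-1}\) to \(\swapop(Q)=\ganit_{\cP}(Q)\) gives \(\mathscr J(Q)=Q\), and conversely. So the real content is the first equivalence, \(\swapop(B)=B\Leftrightarrow\swapop(Q)=\ganit_{\cP}(Q)\), where \(B=T(Q)=\ganit_{\pic}(Q)\).

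The plan is to deduce the first equivalence from an intertwining identity for alternal \(Q\), namely
\[
 \swapop\bigl(\ganit_{\pic}(Q)\bigr)=\ganit_{\pic}\bigl(\ganit_{\cP}^{-1}(\swapop(Q))\bigr),
 \qquad\text{that is,}\qquad \swapop\circ T=T\circ\mathscr J .
\]
The identity is needed for alternal \(Q\) only, and since \(B\) is alternil, \(Q=T^{-1}(B)\) is alternal by the transport property of \(T\). Granting the identity, \(\swapop(B)=B\) reads \(T(\mathscr J(Q))=T(Q)\). Since \(T\) is invertible with inverse \(T^{-1}=\ganit_{\poc}\), this is equivalent to \(\mathscr J(Q)=Q\), which closes the chain of equivalences.

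To establish the intertwining identity, I would invoke the dimorphic transport results of \cite{Kaw2} for the symmetric flexion unit \(E(u,v)=1/u+1/v\). The expected shape is a relation of the form \(\swapop\circ\ganit_{\pic}\circ\swapop=\ganit_{\pic}\circ\ganit_{\cP}^{-1}\) on alternal bimoulds, or an equivalent relation up to \(\negop\) and \(\antiop\), which fix alternal bimoulds up to a sign that can be tracked. Heuristically, \(\cP_r=\prod_i(1/X_i+1/Y_i)\) is the sum of \(\pic\) and its swapped counterpart. The identity says that transporting by \(\pic\) and then swapping agrees with swapping, undoing the symmetric unit, and then transporting by \(\pic\).

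If the published statement is not in exactly this form, I would prove the identity directly by induction on depth. Both sides are compatible with \(\muop\)-multiplicativity of \(\gaxit\) and with the composition law \eqref{eq:gaxi}, so it suffices to match the leading term and then use a uniqueness argument in the style of \cref{lem:tilde-graph}. Concretely, I would evaluate both sides on the depth-one test bimould \(\jmath\) (or on \(\poc\)) and use \eqref{eq:flexion-unit-identities}, where everything is explicit. This step, matching the swap with flexion conjugation, is the main obstacle. The swap does not commute with \(\muop\), so the reduction to test bimoulds must use alternality of \(Q\) essentially, for instance through the bialternal identities of \cite{Sc} relating \(\swapop\) of an alternal bimould to its \(\negop\) and \(\antiop\).
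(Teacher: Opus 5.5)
Your reduction is correct as far as it goes. The last equivalence is formal, and the first one follows from the intertwining identity $\swapop\circ T=T\circ\mathscr J$, which is true; the paper records it right after the proof as $\mathscr J=T^{-1}\circ\swapop\circ T$. But this identity is the whole content of the lemma, and you do not prove it. Pointing to an ``expected shape'' in \cite{Kaw2} is not an argument. Your fallback also does not work as stated. A uniqueness argument in the style of \cref{lem:tilde-graph} pins down the parameter of an operator that is \emph{already known} to be of the form $\gaxit_{A,K}$. Nothing you have shows that $\swapop\circ T\circ\swapop$ has this form, and as you note yourself, swap does not commute with $\muop$, so multiplicativity is unavailable on that side. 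Showing that it is a ganit operator is exactly the missing step.

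The missing idea is the general commutation rule
\[
 \swapop\circ\ganit_C=\ganit_{\asna(C)}\circ\swapop,\qquad \asna=\antiop\circ\swapop\circ\negop\circ\antiop .
\]
It holds for every normalized $C$ and every bimould, because reversing a marked decomposition in \eqref{eq:gaxit} exchanges the upper and lower flexions in each block. The argument then closes in three steps.
\begin{enumerate}[(i)]
\item A direct computation from \eqref{eq:pic-poc} gives $\asna(\poc)=\pac$, where $\pac_r=1/(Y_1\cdots Y_r)$. Hence $\swapop(Q)=\swapop(\ganit_{\poc}(B))=\ganit_{\pac}(\swapop(B))$.
\item The composition law \eqref{eq:gaxi} gives $\ganit_{\pac}\circ\ganit_{\pic}=\ganit_F$ with $F=\muop(\pac,\ganit_{\pac}(\pic))$. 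Sorting the terms of $F_r$ by the set $S$ of positions carrying an $X$-pole shows $F_r=\sum_{S}\prod_{i\in S}X_i^{-1}\prod_{i\notin S}Y_i^{-1}=\cP_r$. Hence $\ganit_{\cP}=\ganit_{\pac}\circ T$, and so $\ganit_{\cP}(Q)=\ganit_{\pac}(B)$.
\item Since $\ganit_{\pac}$ is invertible, comparing (i) and (ii) gives the first equivalence.
\end{enumerate}

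Alternality plays no role here. Your intertwining identity holds for all bimoulds, since both sides of $\swapop\circ T\circ\swapop=T\circ\ganit_{\cP}^{-1}$ equal $\ganit_{\pac}^{-1}$. So your expectation that alternality must enter essentially points in the wrong direction. The same goes for the claim that $\negop$ fixes alternal bimoulds up to sign, which already fails for the depth-one bimould $A_1=X_1+X_1^2$. Likewise, $\cP$ is not the sum of $\pic$ and a swapped copy beyond depth one. The relevant relation is the factorization $\cP=\muop(\pac,\ganit_{\pac}(\pic))$, and it involves $\pac$ rather than $\swapop(\pic)$.
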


\begin{proof}
Write \(\pac\) for \'{E}calle's normalized bimould with \(\pac_r=1/(Y_1\cdots Y_r)\) for \(r\geq1\) \cite{Ec2}*{Section~3.7}, and put \(\asna(C)=\antiop(\swapop(\negop(\antiop(C))))\).  In terms of the push operator \eqref{eq:push} below, we have \(\asna=\pushop\circ\swapop\) by \cite{Kaw1}*{Proposition~2.5}.  Reversing a marked decomposition in \eqref{eq:gaxit} interchanges the upper and the lower flexions in every block, which gives the standard identity (see also \cite{Ma}*{Proposition~3})
\begin{equation}\label{eq:swap-ganit-general}
 \swapop\circ\ganit_C=\ganit_{\asna(C)}\circ\swapop.
\end{equation}
From the explicit formula \eqref{eq:pic-poc} we get
\[
 \bigl(\negop\antiop(\poc)\bigr)_r
 =\frac{(-1)^{r+1}}{X_r(X_r-X_{r-1})\cdots(X_2-X_1)},
\]
and applying swap and then anti to this yields \(\asna(\poc)=\pac\).  We claim that
\begin{equation}\label{eq:P-factorization}
 \ganit_{\cP}=\ganit_{\pac}\circ T.
\end{equation}
Namely, by \eqref{eq:gaxi} the right-hand side is \(\ganit_F\), where
\[
 F=\muop\bigl(\pac,\ganit_{\pac}(\pic)\bigr).
\]
In \(\ganit_{\pac}(\pic)_r\) the first positions of the blocks form a subset \(S\subseteq\{1,\ldots,r\}\) containing \(1\), which contributes \(\prod_{i\in S}X_i^{-1}\), while the remaining positions contribute \(\prod_{i\notin S}Y_i^{-1}\).  In the product defining \(F\) the initial \(\pac\)-factor accounts for the positions before the least element of \(S\), so that every subset of \(\{1,\ldots,r\}\) occurs exactly once and
\[
 F_r=\sum_{S\subseteq\{1,\ldots,r\}}
 \prod_{i\in S}\frac1{X_i}\prod_{i\notin S}\frac1{Y_i}
 =\prod_{i=1}^r\left(\frac1{X_i}+\frac1{Y_i}\right)=\cP_r.
\]
Using \eqref{eq:swap-ganit-general} with \(C=\poc\) and then \eqref{eq:P-factorization}, we obtain
\[
 \swapop(Q)=\ganit_{\pac}(\swapop(B)),
 \qquad
 \ganit_{\cP}(Q)=\ganit_{\pac}(B).
\]
The first equivalence in \eqref{eq:swap-ganit} follows, since \(\ganit_{\pac}\) is invertible, and the second one is immediate from \eqref{eq:fixed-map}.
\end{proof}

Equations \eqref{eq:swap-ganit-general} and \eqref{eq:P-factorization} also give
\[
 \mathscr J=T^{-1}\circ\swapop\circ T,
 \qquad \mathscr J^2=\operatorname{id}.
\]
Namely, \eqref{eq:swap-ganit-general} gives \(\ganit_{\pac}^{-1}\circ\swapop=\swapop\circ T\), and \(\ganit_{\cP}=\ganit_{\pac}\circ T\) is \eqref{eq:P-factorization}.

We call a bimould \(H\) \emph{\(E\)-alternal} if \(\ganit_{\cP}^{-1}(H)\) is alternal, and we denote by \(\cD_E\) the space of all alternal bimoulds \(Q\) for which \(\swapop(Q)\) is \(E\)-alternal and which are even in depth one.  Put
\[
 \cD_E^{\mathrm{reg}}
 =\{Q\in\cD_E\mid T(Q)\text{ is a power-series bimould}\}.
\]
By \cref{lem:swap-ganit}, we have
\begin{equation}\label{eq:fixed-locus}
 T^{-1}(\BARIswapil)
 =\{Q\in\cD_E^{\mathrm{reg}}\mid\mathscr J(Q)=Q\}.
\end{equation}
For the inclusion from left to right, \cref{lem:swap-ganit} gives the fixed-point relation and shows that \(\swapop(Q)\) is \(E\)-alternal.  Conversely, let \(Q\in\cD_E^{\mathrm{reg}}\) be fixed by \(\mathscr J\).  Since \(T(Q)\) is a power-series bimould, \(Q=T^{-1}(T(Q))\) lies in \(\BARI^{\mathrm{loc}}\).  The transport by \(T\) shows that \(T(Q)\) is alternil, and \cref{lem:swap-ganit} shows that it is swap invariant.  The map \(T\) is the identity in depth one, so \(T(Q)\) also has the required parity.  This proves the converse inclusion.

By \eqref{eq:fixed-locus}, it remains to show that \(\cD_E\) is an ari Lie algebra and that \(\mathscr J\) is an automorphism.  Regularity will then follow from \cref{thm:regularity}.

\subsection{Dimorphic transport}

The push operator in our coordinates is
\begin{align}\label{eq:push}
 \pushop(A)_r\bi{X_1,\ldots,X_r}{Y_1,\ldots,Y_r}
 =A_r\bi{-X_r,X_1-X_r,\ldots,X_{r-1}-X_r}
 {-Y_1-\cdots-Y_r,Y_1,\ldots,Y_{r-1}}.
\end{align}
By \(\Lbial\) we denote the space of \emph{bialternal} bimoulds, which consists of all alternal bimoulds \(A\) for which \(\swapop(A)\) is alternal and \(A_1\) is even.  This is an ari Lie algebra \cite{Sc}*{Theorem~2.5.6}, and every element is push invariant \cite{Sc}*{Lemma~2.5.5}.  For push-invariant bimoulds \(A\) and \(B\), we also have
\begin{equation}\label{eq:swap-ari}
 \swapop\bigl(\ari(A,B)\bigr)
 =\ari\bigl(\swapop(A),\swapop(B)\bigr)
\end{equation}
by \cite{Sc}*{Lemma~2.4.1}.  The proofs of these three statements use finite shuffle sums and the substitutions listed in \cref{rem:specialization} below, so they remain valid over \(\widehat{\cL}\).  Since swap preserves \(\Lbial\), equation \eqref{eq:swap-ari} shows that it is an automorphism of this ari Lie algebra.

The function \(E(u,v)=1/u+1/v\) is odd and symmetric.  A direct calculation gives the tripartite identity
\[
 E(u_1,v_1)E(u_2,v_2)
 =E(u_1+u_2,v_1)E(u_2,v_2-v_1)
 +E(u_1+u_2,v_2)E(u_1,v_1-v_2).
\]
This shows that \(E\) is a flexion unit equal to its conjugate unit.

Writing \(E\) also for the bimould concentrated in depth one with that value, the associated primary bimould \(\invmu(\one-E)\) has components \(\prod_iE(u_i,v_i)\), so it is \(\cP\).  The space \(\cD_E\) is therefore the dimorphic space \(\ARI_{\underline{\mathrm{al}}/\underline{\mathfrak{ol}}}\) of \cite{Kaw2}*{Section~3}.  In its notation, \(O=E\), \(\mathfrak{ez}=\mathfrak{oz}=\cP\), and \(\mathfrak{ess}=\mathfrak{oss}\).  Hence
\[
 \ddot{\mathfrak o}\mathfrak{ss}
 =\ddot{\mathfrak e}\mathfrak{ss}
 =\swapop(\mathfrak{ess}).
\]
The secondary bimoulds are defined in \cite{Kaw1}*{Definition~7.1}.  For an invertible \(P\), write \(\adari(P)\) for the adjoint action induced by conjugation with \(P\) in GARI.  Explicitly,
\[
 \gari\bigl(\gari(P,\one+\eps A),\invgari(P)\bigr)
 =\one+\eps\adari(P)(A)
 \qquad\bmod \eps^2.
\]
The map \(\adari(P)\) is invertible and preserves the ari bracket.

The results of \cite{Kaw2} are formulated for decks, which allow substitution along every linear map between variable spaces \cite{Kaw1}*{Definitions~2.1 and 2.2}.  The ring \(\widehat{\cL}\) is not a deck, since a map can send an inverted form to zero.  For every commutative \(\QQ\)-algebra \(R\), however, the rings \(\widehat{\cL}_r(R)\), \(r\geq0\), form a family of functions in the sense of Furusho, Hirose and Komiyama \cite{FHK}*{Definition~1}.  This follows because an injective substitution sends every nonzero linear form to a nonzero linear form and therefore extends to the localization.

\begin{rem}\label{rem:specialization}
Notice that \'{E}calle's four flexions, the operators \(\swapop\), \(\pushop\), \(\antiop\), \(\negop\) and the shuffle permutations are injective substitutions, and in each fixed depth every operation of the flexion structure is a finite combination of these and of \(\muop\).  Therefore the constructions of \cites{Kaw1,Kaw2} make sense for bimoulds over a family of functions, and their depthwise identities continue to hold.  In particular, this applies over \(\widehat{\cL}\).  Representability of decks is used in \cites{Kaw1,Kaw2} only for the correspondence between pro-nilpotent Lie algebras and pro-unipotent groups.  Here the exponential and logarithm are finite in each depth.
\end{rem}

We recall the construction of the secondary bimoulds over \(\widehat{\cL}\).  In \cite{Kaw1}, the second author starts with
\[
 \mathfrak{re}_1=E,
 \qquad
 \mathfrak{re}_{r+1}=\arit(\mathfrak{re}_r)(E).
\]
Let \(e(x)=1-\exp(-x)\), and define the coefficients \(\epsilon_r\) by
\[
 \exp\left(\left(\sum_{r\geq1}\epsilon_rx^{r+1}\right)
 \frac{d}{dx}\right)(x)=e(x).
\]
Then
\begin{equation}\label{eq:secondary-explicit}
 \mathfrak{ess}
 =\expari\left(\sum_{r\geq1}\epsilon_r\mathfrak{re}_r\right).
\end{equation}
Here \(\expari\) is the depth-filtered Lie exponential for GARI.  This is the construction of \cite{Kaw1}*{Equations~(6.28), (6.29), Proposition~6.7 and Definition~6.11}.  The dotted secondary bimoulds are obtained by swap \cite{Kaw1}*{Definition~7.1}.  Each \(\mathfrak{re}_r\) is concentrated in depth \(r\), so \eqref{eq:secondary-explicit} is finite in every fixed depth.  By \cref{rem:specialization}, these bimoulds have components in \(\widehat{\cL}\), and the same applies to \(\invmu\), \(\gari\), \(\invgari\), \(\expari\), \(\logari\), and \(\adari\), whose defining expressions are finite in each depth.

\begin{lem}\label{lem:localized-secondary}
The bimoulds \(\mathfrak{ess}\) and \(\ddot{\mathfrak e}\mathfrak{ss}\) are symmetral.  Put
\[
 H_+=\adari(\mathfrak{ess}),
 \qquad
 H_-=\adari(\ddot{\mathfrak e}\mathfrak{ss}).
\]
Both maps preserve alternality and induce the identity on the associated graded for the depth filtration.  For every push-invariant bimould \(A\),
\begin{align}
 \swapop\bigl(H_+(A)\bigr)
 &=\ganit_{\cP}\bigl(H_-(\swapop(A))\bigr),
 \label{eq:second-fundamental}\\
 \swapop\bigl(H_-(A)\bigr)
 &=\ganit_{\cP}\bigl(H_+(\swapop(A))\bigr).\notag
\end{align}
Moreover, \(\ganit_{\cP}\) and its inverse induce the identity on the associated graded for the depth filtration.
\end{lem}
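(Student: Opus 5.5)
The statement is essentially a transfer of results from \cite{Kaw1} and \cite{Kaw2} to the localized setting. The plan is to check that each ingredient is a finite depthwise identity built from injective substitutions and \(\muop\), and then to invoke \cref{rem:specialization}.

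\emph{Symmetrality of \(\mathfrak{ess}\).} The flexion unit \(E\) satisfies the tripartite identity displayed above. In \cite{Kaw1}*{Proposition~6.7 and Definition~6.11}, the bimoulds \(\mathfrak{re}_r\) are shown to be alternal for any flexion unit. The Lie exponential \(\expari\) of an alternal bimould is symmetral, because alternal bimoulds form an ari Lie algebra and \(\expari\) maps it into the GARI subgroup of symmetral bimoulds. This gives symmetrality of \(\mathfrak{ess}\) through \eqref{eq:secondary-explicit}.

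\emph{Symmetrality of \(\ddot{\mathfrak e}\mathfrak{ss}\).} This is \cite{Kaw2}*{Section~3}. Since \(E\) is odd and symmetric, it is its own conjugate unit, so \(\ddot{\mathfrak e}\mathfrak{ss}=\swapop(\mathfrak{ess})=\ddot{\mathfrak o}\mathfrak{ss}\) is the swapped secondary bimould attached to the conjugate unit. Its symmetrality is \cite{Kaw1}*{Definition~7.1 and the subsequent results}. Each of these identities involves only finitely many \(\mathfrak{re}_r\) in a fixed depth, so by \cref{rem:specialization} it persists over \(\widehat{\cL}\).

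\emph{Properties of \(H_\pm\).} For a symmetral \(P\), the map \(\adari(P)\) is conjugation inside the symmetral subgroup, so it preserves the alternal tangent space. Because \(P\in\one+\BARI^{\mathrm{loc}}\) has no depth-zero deviation, \(\adari(P)(A)-A\) only involves products of \(A\) with positive-depth parts of \(P\). This difference therefore lies in strictly higher depth, and so \(H_\pm\) is the identity on the associated graded. The same argument applies to \(\ganit_{\cP}=\gaxit_{\one,\cP}\): every term with a nontrivial \(\cP\)-block raises depth. Its inverse is \(\ganit_F\) with \(F\) solving \(\muop(\cP,\ganit_{\cP}(F))=\one\) by \eqref{eq:gaxi}, so the same conclusion holds for the inverse.

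\emph{The identities \eqref{eq:second-fundamental}.} These are the main content and the expected obstacle. I would take them from the ``second fundamental identity'' of \cite{Kaw2}, which in its notation reads \(\swapop\circ\adari(\mathfrak{ess})=\ganit_{\mathfrak{oz}}\circ\adari(\ddot{\mathfrak o}\mathfrak{ss})\circ\swapop\) on push-invariant bimoulds. The substitutions \(\mathfrak{oz}=\cP\) and \(\ddot{\mathfrak o}\mathfrak{ss}=\ddot{\mathfrak e}\mathfrak{ss}\) then give the first line of \eqref{eq:second-fundamental}.

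The second line should follow by symmetry \(E\leftrightarrow O\), since the unit is self-conjugate. Failing that, it can be obtained directly: apply \(\swapop\) to the first identity with \(A\) replaced by \(\swapop(A)\), which is push invariant because swap commutes with push up to conjugation. Then use \(\mathscr J^2=\operatorname{id}\) in the form \(\swapop\circ\ganit_{\cP}\circ\swapop=\ganit_{\cP}^{-1}\).

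The real work is the validity of this identity over \(\widehat{\cL}\). Following \cref{rem:specialization}, I would argue that in each fixed depth \cite{Kaw2}'s proof is a finite polynomial identity in the components of \(E\), \(A\) and their images under injective substitutions. Such an identity can be verified over the universal family of functions and then specialized, with no representability needed because \(\expari\) is finite depthwise.
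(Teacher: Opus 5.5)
Your proposal is correct and matches the paper's proof. The symmetrality of \(\ddot{\mathfrak e}\mathfrak{ss}\) and both lines of \eqref{eq:second-fundamental} are imported from \cite{Kaw2} (Appendix~A and Corollary~3.2 there, with \(O=E\) and \(\mathfrak{oz}=\cP\)) and transferred to \(\widehat{\cL}\) by \cref{rem:specialization}; alternality preservation and the depth-filtration claims come from conjugation in the symmetral subgroup and the marked-letter formula, as you say.

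Your two extras are valid:
\begin{enumerate}[(i)]
\item symmetrality of \(\mathfrak{ess}\) follows directly from alternality of the \(\mathfrak{re}_r\) via \eqref{eq:secondary-explicit};
\item the second line follows from the first applied to \(\swapop(A)\), using \(\ganit_{\cP}\circ\swapop\circ\ganit_{\cP}=\swapop\), which is equivalent to \(\mathscr J^2=\operatorname{id}\).
\end{enumerate}
One correction: the reference for secondary symmetrality is \cite{Kaw2}*{Theorem~A.7}, not \cite{Kaw1}*{Definition~7.1}, which only defines the dotted bimoulds.
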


\begin{proof}
The symmetrality of the two secondary bimoulds is \cite{Kaw2}*{Theorem~A.7}.  The displayed identities are \cite{Kaw2}*{Corollary~3.2} specialized to \(O=E\), \(\mathfrak{oz}=\cP\), and \(\mathfrak{oss}=\mathfrak{ess}\).

By \cref{rem:specialization}, the first fundamental identity \cite{Sc}*{Corollary~2.8.6}, the crash identities \cite{Kaw1}*{Proposition~7.5 and Theorem~7.12}, secondary symmetrality \cite{Kaw2}*{Appendix~A}, and the shuffle-exponential argument of \cite{Ko}*{Theorem~A.7} remain valid over \(\widehat{\cL}\).

Conjugation by a symmetral element preserves the tangent space of this subgroup, so \(H_+\) and \(H_-\) preserve alternality.  The logarithms of the secondary bimoulds have positive depth, and every nonidentity term in their adjoint actions raises the depth.  The corresponding statement for \(\ganit_{\cP}^{\pm1}\) follows from the marked-letter formula.
\end{proof}

\begin{thm}\label{thm:transport}
Each map
\[
 H_\pm:\Lbial\longrightarrow\cD_E
\]
is an isomorphism of ari Lie algebras.  In particular, \(\cD_E\) is an ari Lie algebra.
\end{thm}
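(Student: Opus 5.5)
We prove the statement for \(H_+\); the argument for \(H_-\) is symmetric, using the second identity in \eqref{eq:second-fundamental}.

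\textbf{Step 1: \(H_+\) respects ari and is injective.} Since \(H_+=\adari(\mathfrak{ess})\) comes from conjugation in GARI, it preserves the ari bracket. It is invertible on all of \(\BARI\) over \(\widehat{\cL}\), with inverse \(\adari(\invgari(\mathfrak{ess}))\). So it suffices to show that \(H_+\) maps \(\Lbial\) bijectively onto \(\cD_E\).

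\textbf{Step 2: \(H_+(\Lbial)\subseteq\cD_E\).} Let \(A\in\Lbial\).
\begin{enumerate}[(a)]
\item \(H_+(A)\) is alternal by \cref{lem:localized-secondary}.
\item \(A\) is push invariant by \cite{Sc}*{Lemma~2.5.5}, so \eqref{eq:second-fundamental} gives
\[
 \ganit_{\cP}^{-1}\bigl(\swapop(H_+(A))\bigr)=H_-(\swapop(A)).
\]
Here \(\swapop(A)\) is alternal because \(A\) is bialternal, and \(H_-\) preserves alternality. Hence \(\swapop(H_+(A))\) is \(E\)-alternal.
\item Depth one: \(H_+\) is the identity on the associated graded, and depth one is the lowest depth, so \(H_+(A)_1=A_1\) is even.
\end{enumerate}

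\textbf{Step 3: \(H_+\) maps onto \(\cD_E\).} This is the main obstacle. For \(Q\in\cD_E\), set \(A=H_+^{-1}(Q)\), which is alternal, since \(H_+^{-1}=\adari(\invgari(\mathfrak{ess}))\) is conjugation by a symmetral element. We need \(\swapop(A)\) to be alternal.

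If \(A\) were known to be push invariant, then \eqref{eq:second-fundamental} would give \(\swapop(A)=H_-^{-1}\bigl(\ganit_{\cP}^{-1}(\swapop Q)\bigr)\), which is alternal because \(\ganit_{\cP}^{-1}(\swapop Q)\) is alternal by hypothesis. Push invariance of \(A\) is not available a priori, so I would argue by induction on depth.

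Assume \(A\) is bialternal up to depth \(r-1\), and write \(A=A'+A_r+(\text{higher})\), where \(A'\) is truncated below depth \(r\). The plan is:
\begin{enumerate}[(a)]
\item Construct a genuine bialternal element \(\bar A\) agreeing with \(A\) below depth \(r\). Then \(Q-H_+(\bar A)\in\cD_E\) begins in depth \(\ge r\), because \(\cD_E\) is a vector space and Step 2 applies to \(\bar A\).
\item On the leading depth-\(r\) component, \(H_+\), \(H_-\) and \(\ganit_{\cP}^{\pm1}\) all act as the identity (\cref{lem:localized-secondary}). So the leading term \(c\) of \(Q-H_+(\bar A)\) is alternal with alternal swap and even depth-one part, i.e.\ it is bialternal.
\item Replace \(\bar A\) by \(\bar A+c\) and continue.
\end{enumerate}
This exhibits \(Q\) as \(H_+\) of a depthwise-convergent bialternal sum. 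The sum is finite in each depth, and \(\Lbial\) is closed under such sums.

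Step (a) itself needs care. The cleanest route avoids constructing \(\bar A\) separately: run the induction directly on \(Q\). Define \(A^{(r)}\) as the sum of the bialternal leading terms found so far. Then \(Q-H_+(A^{(r)})\in\cD_E\) starts at depth \(\ge r\), and its depth-\(r\) part is bialternal by the associated-graded argument in (b). This only uses Step 2 and linearity of \(\cD_E\), so push invariance of \(A\) is never assumed.

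\textbf{Conclusion.} \(H_+\) restricts to a linear bijection \(\Lbial\to\cD_E\) preserving ari. Since \(\Lbial\) is an ari Lie algebra \cite{Sc}*{Theorem~2.5.6}, \(\cD_E=H_+(\Lbial)\) is closed under ari, and both \(H_\pm\) are Lie algebra isomorphisms.
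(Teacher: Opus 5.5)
Your proposal is correct and follows essentially the same route as the paper: the inclusion \(H_\pm(\Lbial)\subseteq\cD_E\) comes from push invariance and \eqref{eq:second-fundamental}, and injectivity comes from invertibility of the adjoint action. Surjectivity is obtained by repeatedly subtracting \(H_+\) of the leading term, which is bialternal because \(\ganit_{\cP}^{-1}\) acts as the identity on the associated graded; your ``cleanest route'' in Step~3 is exactly the paper's iteration \(B_{n+1}=B_n-H_+(\leadop(B_n))\), so the detour through \(\bar A\) and the remark about push invariance of \(H_+^{-1}(Q)\) can be dropped.
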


For bimoulds over a deck, this is the special case~\(O=E\) of \cite{Kaw2}*{Theorem~3.5}.  The proof below shows that the argument also applies over \(\widehat{\cL}\).

\begin{proof}
Let \(A\in\Lbial\).  Both \(A\) and \(\swapop(A)\) are alternal and push invariant.  By \cref{lem:localized-secondary}, \(H_+(A)\) is alternal, has the required depth-one parity and satisfies
\[
 \ganit_{\cP}^{-1}\bigl(\swapop(H_+(A))\bigr)
 =H_-(\swapop(A)).
\]
The right-hand side is alternal, and therefore \(H_+(A)\in\cD_E\).  The second formula in that lemma gives \(H_-(A)\in\cD_E\).  Both maps are injective because every adjoint action is invertible.

For a nonzero bimould \(B\), let \(\leadop(B)\) be its first nonzero depth component, viewed as a bimould concentrated in that depth.  If \(B\in\cD_E\), then \(\leadop(B)\) is alternal.  The bimould
\[
 \ganit_{\cP}^{-1}(\swapop(B))
\]
is also alternal.  Since \(\ganit_{\cP}^{-1}\) induces the identity on the associated graded, its first nonzero component is \(\swapop(\leadop(B))\).  In particular, \(\swapop(\leadop(B))\) is alternal, so \(\leadop(B)\) is bialternal.  It also has the required depth-one parity.  This is inherited from \(B_1\) when the leading depth is one, and otherwise its depth-one component is zero.  Hence
\begin{equation}\label{eq:lead-bialternal}
 \leadop(B)\in\Lbial.
\end{equation}

Starting with \(B_0=B\), define
\[
 C_n=\leadop(B_n),
 \qquad
 B_{n+1}=B_n-H_+(C_n),
\]
and stop if \(B_n=0\).  By \eqref{eq:lead-bialternal} and the inclusion proved in the first part, every \(B_n\) lies in \(\cD_E\).  Since \(H_+\) induces the identity on the associated graded, the lowest nonzero depth increases strictly at every step.  Therefore
\[
 C=\sum_{n\geq0}C_n
\]
is finite in each depth and belongs to \(\Lbial\).  Continuity in the depth filtration and telescoping give
\[
 H_+(C)=\sum_{n\geq0}(B_n-B_{n+1})=B.
\]
The same construction with \(H_-\) proves its surjectivity.  Since adjoint actions preserve ari and \(\Lbial\) is an ari Lie algebra, both maps are Lie algebra isomorphisms and \(\cD_E\) is an ari Lie algebra.
\end{proof}

\begin{prop}\label{prop:J-automorphism}
\(\mathscr J\) restricts to an automorphism of the ari Lie algebra \(\cD_E\), and
\begin{equation}\label{eq:J-conjugation}
 \mathscr J=H_-\circ\swapop\circ H_+^{-1}
 \qquad\text{on }\cD_E.
\end{equation}
\end{prop}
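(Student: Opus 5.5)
We plan to prove \eqref{eq:J-conjugation} directly from the fundamental identities of \cref{lem:localized-secondary}, and then read off the automorphism property from \cref{thm:transport} and \eqref{eq:swap-ari}. Let \(Q\in\cD_E\). By \cref{thm:transport} we can write \(Q=H_+(A)\) with a unique \(A\in\Lbial\). Every element of \(\Lbial\) is push invariant, so the first identity in \eqref{eq:second-fundamental} applies and gives
\[
 \swapop(Q)=\swapop\bigl(H_+(A)\bigr)=\ganit_{\cP}\bigl(H_-(\swapop(A))\bigr).
\]
Applying \(\ganit_{\cP}^{-1}\) and using the definition \eqref{eq:fixed-map}, we get
\[
 \mathscr J(Q)=H_-\bigl(\swapop(A)\bigr)=\bigl(H_-\circ\swapop\circ H_+^{-1}\bigr)(Q),
\]
which is \eqref{eq:J-conjugation}.

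With the formula in hand, each factor is an ari Lie algebra isomorphism. By \cref{thm:transport}, \(H_+^{-1}:\cD_E\to\Lbial\) is an isomorphism. Swap maps \(\Lbial\) to itself, because bialternality is symmetric under the involution swap and swap preserves the parity in depth one. Since elements of \(\Lbial\) are push invariant, \eqref{eq:swap-ari} shows that swap is an ari automorphism of \(\Lbial\), as already noted. Finally, \(H_-:\Lbial\to\cD_E\) is an isomorphism, again by \cref{thm:transport}. The composite is therefore an ari automorphism of \(\cD_E\). In particular, \(\mathscr J\) maps \(\cD_E\) into itself, which is the meaning of ``restricts''. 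The earlier observation \(\mathscr J^2=\operatorname{id}\) is consistent with this; it corresponds to the second identity in \eqref{eq:second-fundamental}.

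The step I expect to need the most care is making sure the identities are used in the right setting, namely over \(\widehat{\cL}\) and for exactly the bimoulds required. There are three points to check:
\begin{enumerate}[(i)]
\item The identity \eqref{eq:second-fundamental} requires push invariance of the argument \(A\). This holds because \(A\in\Lbial\), by \cite{Sc}*{Lemma~2.5.5}, which remains valid over \(\widehat{\cL}\) by \cref{rem:specialization}.
\item We need \(\swapop(\Lbial)\subseteq\Lbial\). For \(A\in\Lbial\), \(\swapop(A)\) is alternal and \(\swapop(\swapop(A))=A\) is alternal. For the depth-one parity, note that \(\swapop(A)_1\bi{X}{Y}=A_1\bi{Y}{X}\), so evenness is preserved.
\item The operator \(\ganit_{\cP}\) must be invertible on the relevant space. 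This follows from \cref{lem:localized-secondary}: it induces the identity on the associated graded, and the components are finite in each depth.
\end{enumerate}
Once these checks are done, the argument is purely formal.
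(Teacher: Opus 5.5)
Your proposal is correct and is essentially the paper's proof. You apply the first identity in \eqref{eq:second-fundamental} to \(A=H_+^{-1}(Q)\in\Lbial\), which is push invariant by \cite{Sc}*{Lemma~2.5.5}, then invert \(\ganit_{\cP}\), and finally deduce the automorphism property from \cref{thm:transport} and \eqref{eq:swap-ari}. Your extra checks are correct and only make explicit what the paper leaves implicit: that swap preserves \(\Lbial\), including the depth-one parity, and that \(\mathscr J^2=\operatorname{id}\) is recovered from the second identity.
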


\begin{proof}
Let \(A\in\Lbial\).  By \cite{Sc}*{Lemma~2.5.5}, \(A\) is push invariant, so \eqref{eq:second-fundamental} applies and gives \(\mathscr J\bigl(H_+(A)\bigr) =\ganit_{\cP}^{-1}\bigl(\swapop(H_+(A))\bigr)=H_-(\swapop(A))\).  We therefore have \(\mathscr J\circ H_+=H_-\circ\swapop\) on \(\Lbial\), and \eqref{eq:J-conjugation} follows, since \(H_+\) is a bijection onto \(\cD_E\) by \cref{thm:transport}.  The right-hand side is an ari automorphism by \cref{thm:transport} and \eqref{eq:swap-ari}.
\end{proof}

\begin{proof}[Proof of \cref{thm:swap}]
Let \(a,b\in\BARIswapil\) and put \(Q_a=T^{-1}(a)\), \(Q_b=T^{-1}(b)\).  By \eqref{eq:fixed-locus}, both lie in \(\cD_E\) and are fixed by \(\mathscr J\).  Put \(N=\ari(Q_a,Q_b)\).  Since \(\cD_E\) is an ari Lie algebra, \(N\in\cD_E\), and \cref{prop:J-automorphism} gives
\[
 \mathscr J(N)
 =\ari\bigl(\mathscr J(Q_a),\mathscr J(Q_b)\bigr)
 =\ari(Q_a,Q_b)=N.
\]
By definition, \(T(N)=\uri(a,b)\), which is a power-series bimould by \cref{thm:regularity}, and componentwise polynomial for polynomial inputs.  We get \(N\in\cD_E^{\mathrm{reg}}\), and \eqref{eq:fixed-locus} gives \(\uri(a,b)=T(N)\in\BARIswapil\).
\end{proof}

\begin{proof}[Proof of \cref{thm:A}]
Let \(a,b\in\BARIswapil\).  The bracket \(\uri(a,b)\) is alternil, since uri preserves alternility (see \Cref{sec:flexion}), and it is a power-series bimould by \cref{thm:regularity}.  Its depth-one component vanishes by \eqref{eq:tangent-uri} and \eqref{eq:diamond-depth-one}, so the parity condition in \eqref{eq:swap-parity} holds, and it is swap invariant by \cref{thm:swap}.  This proves (i).  By \cref{thm:regularity}, the uri bracket preserves \(\BARIpol\), which together with (i) shows that \(\mathfrak B\) is a Lie subalgebra.  The proof of \cref{thm:regularity} also shows that the bracket of two weight-homogeneous elements has weight equal to the sum of their weights.  Therefore \(\mathfrak B\) is weight graded (see also \Cref{sec:examples}).
\end{proof}

\section{The balanced double shuffle Lie algebra}\label{sec:balanced}

We recall the double shuffle space \(\mathfrak{dm}_0\) and its balanced counterpart \(\mathfrak{bm}_0\), following \cite{Bu1}*{Sections~4.4, 5.4 and Appendix~B.2}.  Let \(\mathsf X=\{x_0,x_1\}\) and \(\mathsf Y=\{y_1,y_2,\ldots\}\).  On the completed duals of the shuffle and stuffle algebras, respectively, consider the coproducts
\[
 \Delta_{\!\shuffle}(x_i)=x_i\otimes1+1\otimes x_i,
 \qquad
 \Delta_*(y_n)=y_n\otimes1+1\otimes y_n
   +\sum_{j=1}^{n-1}y_j\otimes y_{n-j}.
\]
Let \(\Pi_Y:\QQ\langle\mathsf X\rangle\to \QQ\langle\mathsf Y\rangle\) kill words ending in \(x_0\) and send
\[
 x_0^{k_1-1}x_1\cdots x_0^{k_r-1}x_1
 \longmapsto y_{k_1}\cdots y_{k_r}.
\]
For \(\psi\in\QQ\langle\mathsf X\rangle\), put
\[
 \psi_*:=\Pi_Y(\psi)
 +\sum_{n\geq2}\frac{(-1)^{n-1}}{n}
       (\Pi_Y(\psi)\mid y_n)y_1^n.
\]
Then \(\mathfrak{dm}_0\) consists of the polynomials \(\psi\) such that
\[
 \begin{gathered}
  (\psi\mid x_0)=(\psi\mid x_1)=(\psi\mid x_0x_1)=0,\\
  \Delta_{\!\shuffle}(\psi)=\psi\otimes1+1\otimes\psi,
  \qquad
  \Delta_*(\psi_*)=\psi_*\otimes1+1\otimes\psi_*.
 \end{gathered}
\]
By Racinet \cite{Rac} (see also \cite{Bu1}*{Theorem~B.30}), the space \(\mathfrak{dm}_0\) is a weight-graded Lie algebra under the Ihara bracket
\[
 \{\psi_1,\psi_2\}
 =d_{\psi_1}(\psi_2)-d_{\psi_2}(\psi_1)+[\psi_1,\psi_2],
 \qquad
 d_\psi(x_0)=0,\quad d_\psi(x_1)=[x_1,\psi]\,.
\]

For the balanced version, let \(\mathcal B=\{b_0,b_1,b_2,\ldots\}\) and define
\[
 \Delta_{\mathrm b}(b_0)=b_0\otimes1+1\otimes b_0,
 \qquad
 \Delta_{\mathrm b}(b_n)=b_n\otimes1+1\otimes b_n
   +\sum_{j=1}^{n-1}b_j\otimes b_{n-j}\quad(n\geq1).
\]
Write \(\QQ\langle\mathcal B\rangle^0\) for the span of \(1\) and of the words not beginning in \(b_0\), and let \(\Pi_0:\QQ\langle\mathcal B\rangle\to \QQ\langle\mathcal B\rangle^0\) be the corresponding projection.  The involution \(\tau\) on this space is given by
\begin{equation}\label{eq:balanced-tau}
 \tau\bigl(b_{k_1}b_0^{m_1}\cdots b_{k_r}b_0^{m_r}\bigr)
 =b_{m_r+1}b_0^{k_r-1}\cdots b_{m_1+1}b_0^{k_1-1}.
\end{equation}
Burmester's space \(\mathfrak{bm}_0\) consists of the polynomials \(f\in\QQ\langle\mathcal B\rangle\) satisfying
\[
 \begin{gathered}
  (f\mid b_0)=0,
  \qquad \Delta_{\mathrm b}(f)=f\otimes1+1\otimes f,
  \qquad \tau(\Pi_0(f))=\Pi_0(f),\\
  (f\mid b_2)=(f\mid b_4)=(f\mid b_6)=0.
 \end{gathered}
\]
The weight of a word is the sum of its positive indices plus the number of occurrences of \(b_0\).  There is an explicit weight-preserving embedding of vector spaces
\[
 \theta:\mathfrak{dm}_0\lhook\joinrel\longrightarrow\mathfrak{bm}_0
\]
by \cite{Bu1}*{Theorem~4.28}. We next recall the bimould realization of \(\mathfrak{bm}_0\).  For \(f\in\QQ\langle\mathcal B\rangle\), set \(\rho_B(f)_0=0\) and, for \(r\geq1\), define
\begin{align*}
 \rho_B(f)_r\bi{X_1,\ldots,X_r}{Y_1,\ldots,Y_r}
 :={}&\sum_{\substack{k_1,\ldots,k_r\geq1\\m_1,\ldots,m_r\geq0}}
 (f\mid b_{k_1}b_0^{m_1}\cdots b_{k_r}b_0^{m_r})
 \prod_{i=1}^rX_i^{k_i-1}Y_i^{m_i},
\end{align*}
and put
\[
 (A^{\#_Y})_r\bi{X_1,\ldots,X_r}{Y_1,\ldots,Y_r}
 :=A_r\bi{X_1,\ldots,X_r}
 {Y_1,Y_1+Y_2,\ldots,Y_1+\cdots+Y_r}.
\]
In particular, \(\rho_B\) only sees \(\Pi_0(f)\).  Set
\[
 \Phi(f):=\rho_B(f)^{\#_Y}.
\]
By \cite{Bu1}*{Corollary~5.51}, restriction gives a weight-preserving vector-space isomorphism
\begin{equation}\label{eq:bm0-bari-isomorphism}
 \Phi:\mathfrak{bm}_0\overset{\sim}{\longrightarrow}\mathfrak B.
\end{equation}

For \(f\in\QQ\langle\mathcal B\rangle\), write \(d_f^q\) for the derivation defined in \cite{Bu1}*{Definition~3.12}, and let \(d_f^{\mathrm b}=-d_f^q\).  We set
\[
 \{f,g\}_{\mathrm b}
 :=d_f^{\mathrm b}(g)-d_g^{\mathrm b}(f)+[f,g]\,,
\]
which has the same shape as the Ihara bracket above.\footnote{This is the negative of the bracket \(d_f^q(g)-d_g^q(f)-[f,g]\) of \cite{Bu1}*{Definition~3.16}.  We choose the sign so that \eqref{eq:b-bracket-uri-comparison} holds without a sign.  The ari and uri brackets of \cite{Bu1} are the negatives of ours, since \cite{Bu1}*{Definition~C.22} uses the opposite order of the two preari arguments from \eqref{eq:ari}.}  The augmentation ideal
\[
 \mathfrak m_{\mathrm b}
 :=\{f\in\QQ\langle\mathcal B\rangle:(f\mid1)=0\}
\]
is a weight-graded Lie algebra for this bracket \cite{Bu1}*{Theorem~3.20}.  Notice that the isomorphism \eqref{eq:bm0-bari-isomorphism} alone does not prove closure of \(\mathfrak{bm}_0\), since the map \(\rho_B\) forgets every word beginning in \(b_0\).  We first show that \(\Pi_0\) determines the missing coefficients for elements of \(\ker(\partial_0)\).

Let \(v_0=b_0\), and define the primitive letters \(v_k\), \(k\geq1\), by
\[
 \sum_{k\geq1}v_kt^k
 =\log\left(1+\sum_{k\geq1}b_kt^k\right).
\]
The concatenation derivation of \cite{Bu1}*{Definition~4.46} is given by
\[
 \partial_0(v_0)=1,\qquad \partial_0(v_k)=0\quad(k\geq1).
\]
Since \(b_k\), \(k\geq1\), is a polynomial in \(v_1,v_2,\ldots\), this is equivalently
\[
 \partial_0(b_0)=1,\qquad \partial_0(b_k)=0\quad(k\geq1).
\]
The restriction of \(\Pi_0\) to \(\ker(\partial_0)\) is injective.  In fact, its inverse is the section
\[
 \operatorname{sec}_q(F)
 =\sum_{m\geq0}\frac{(-1)^m}{m!}v_0^m\partial_0^m(F)
 \qquad(F\in\QQ\langle\mathcal B\rangle^0)
\]
by \cite{Bu1}*{Proposition~4.47}.  The primitive elements for \(\Delta_{\mathrm b}\) form the free Lie algebra on the \(v_k\) \cite{Bu1}*{Proposition~4.32}.  Lazard elimination, together with \((f\mid b_0)=0\), therefore gives
\begin{equation}\label{eq:bm0-partial-kernel}
 \mathfrak{bm}_0\subseteq\ker(\partial_0).
\end{equation}

\begin{lem}\label{lem:partial-b-bracket}
The derivation \(\partial_0\) is a derivation for \(\{\ ,\ \}_{\mathrm b}\):
\begin{equation}\label{eq:partial-b-bracket}
 \partial_0\{f,g\}_{\mathrm b}
 =\{\partial_0f,g\}_{\mathrm b}+\{f,\partial_0g\}_{\mathrm b}.
\end{equation}
In particular, \(\ker(\partial_0)\) is closed under \(\{\ ,\ \}_{\mathrm b}\).
\end{lem}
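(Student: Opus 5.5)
The plan is to reduce \eqref{eq:partial-b-bracket} to a statement about the single operator \(d^{\mathrm b}_f\), namely the commutator identity
\begin{equation*}
 \partial_0\circ d_f^{\mathrm b}-d_f^{\mathrm b}\circ\partial_0=d^{\mathrm b}_{\partial_0 f}
 \qquad(f\in\QQ\langle\mathcal B\rangle).
\end{equation*}
Granting this, the lemma follows by bookkeeping.  Since \(\partial_0\) is a derivation of concatenation, \(\partial_0[f,g]=[\partial_0f,g]+[f,\partial_0g]\).  The commutator identity gives
\(\partial_0 d_f^{\mathrm b}(g)=d_f^{\mathrm b}(\partial_0 g)+d_{\partial_0f}^{\mathrm b}(g)\), and symmetrically for \(d_g^{\mathrm b}(f)\).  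Collecting the terms according to which argument carries \(\partial_0\) gives exactly \(\{\partial_0f,g\}_{\mathrm b}+\{f,\partial_0g\}_{\mathrm b}\).  The closure statement is then immediate: if \(\partial_0f=\partial_0g=0\), then the right-hand side of \eqref{eq:partial-b-bracket} vanishes.

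To prove the commutator identity, I will use that both sides are derivations of the concatenation algebra \(\QQ\langle\mathcal B\rangle\).  The left side is a commutator of two derivations, and \(d_{\partial_0 f}^{\mathrm b}\) is a derivation by \cite{Bu1}*{Definition~3.12}.  It therefore suffices to check the identity on a generating set.  The convenient choice is the primitive letters \(v_0=b_0\) and \(v_k\) (\(k\ge1\)), because \(\partial_0\) is simplest there: \(\partial_0(v_0)=1\) and \(\partial_0(v_k)=0\).  Since every derivation kills constants, \(d_f^{\mathrm b}(\partial_0 v_k)=0\) for all \(k\geq0\).  The identity on generators therefore reduces to
\begin{equation*}
 \partial_0\bigl(d_f^{\mathrm b}(v_k)\bigr)=d^{\mathrm b}_{\partial_0f}(v_k)\qquad(k\geq0).
\end{equation*}
So we need the images \(d_f^{\mathrm b}(v_k)\), as functions of \(f\), to commute with \(\partial_0\).

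The main obstacle is to obtain an explicit form of \(d_f^{q}\) on these letters.  I would first try the derivation formula of \cite{Bu1} in primitive coordinates.  Since the primitive elements form the free Lie algebra on the \(v_k\) \cite{Bu1}*{Proposition~4.32}, I expect \(d_f^{q}(v_0)\) and \(d_f^{q}(v_k)\) to be built from \(f\) by three kinds of operations:
\begin{enumerate}[(i)]
 \item concatenation with letters \(v_j\) for \(j\geq1\);
 \item commutators with such letters;
 \item linear coefficient-extraction or shift maps that act only on the positive letters.
\end{enumerate}
Each of these commutes with \(\partial_0\), since \(\partial_0\) kills \(v_j\) for \(j\ge1\) and is a derivation.

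If, however, \(v_0\) itself appears in the formula, say \(d_f(v_k)\) contains terms like \([v_0,\cdot\,]\), then \(\partial_0\) produces an extra term.  That term must be checked to be absorbed, most likely by the fact that \(d_f(v_0)\) has a matching form.  If the primitive coordinates prove awkward, the fallback is to verify the identity on the letters \(b_0,b_k\) directly, using \(\partial_0(b_k)=0\) for \(k\ge1\) and \(\partial_0(b_0)=1\).  In that case the check is a direct inspection of the defining formula of \(d_f^q(b_n)\) in \cite{Bu1}*{Definition~3.12}, confirming that \(b_0\) enters only linearly through \(f\).
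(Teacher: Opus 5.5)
\textbf{Verdict: correct outline with the essential step missing.}

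\textbf{What matches the paper.} Your outer reduction is the paper's. You prove \([\partial_0,d_f^{\mathrm b}]=d^{\mathrm b}_{\partial_0 f}\), note that both sides are derivations of concatenation so it suffices to test on letters, and then expand \(\{f,g\}_{\mathrm b}\). Your bookkeeping for that last step is correct.

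\textbf{The gap.} The step that carries all of the content is never proved, namely \(\partial_0\bigl(d^q_f(b_a)\bigr)=d^q_{\partial_0 f}(b_a)\) on generators. You only describe what you expect the formula to look like.

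The detour through primitive letters does not help. For \(k\geq1\) the letter \(v_k\) is a polynomial in \(b_1,b_2,\ldots\), all killed by \(\partial_0\). So the identity on \(v_k\) is equivalent to the identity on the \(b_j\), and \cite{Bu1}*{Definition~3.12} is written on the \(b_a\) anyway.

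Your suggested mechanism for terms \([v_0,\cdot\,]\), absorption through the form of \(d_f(v_0)\), is also off. The test on \(v_k\) never involves \(d_f(v_0)\), and both sides of the commutator identity simply vanish on \(b_0\). Occurrences of \(b_0\) in \(d^q_f(b_a)\) do not need to be cancelled. If they are the \(b_0\)'s of \(f\) carried along by operators commuting with \(\partial_0\), then \(\partial_0\) acting on them produces exactly the right-hand side \(d^q_{\partial_0 f}(b_a)\).

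\textbf{What is needed.} The paper supplies a concrete inspection of \cite{Bu1}*{Definition~3.12}.
\begin{enumerate}[(i)]
\item For \(a\geq1\) and a word \(w\) of positive depth, \(d^q_w(b_a)\) is built from \(w\) using three ingredients: the index-raising maps \(\partial_i\) (\(i\geq1\)) of \cite{Bu1}*{Definition~3.8}, the operators \(\delta_j\) made from them, and the maps \(\mathrm{lc}_j^{(a,r)}\) of \cite{Bu1}*{Definition~3.10}.
\item Each of these ingredients only modifies positive-index letters and leaves every \(b_0\) in place, hence commutes with \(\partial_0\). Together with \(\partial_0(b_a)=0\), this gives the identity for such \(w\).
\item Depth-zero words \(w=b_0^m\) fall under a separate clause of the definition and must be checked directly, via \(\partial_0[b_0^m,b_a]=m[b_0^{m-1},b_a]\).
\end{enumerate}

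Your criterion that ``\(b_0\) enters only linearly through \(f\)'' is not the right condition, since linearity in \(f\) is automatic. The actual condition is that \(w\mapsto d^q_w(b_a)\) commutes with \(\partial_0\). This has to be verified for each ingredient of the formula and for the depth-zero clause.
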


\begin{proof}
We first show that
\begin{equation}\label{eq:partial-db}
 [\partial_0,d_f^{\mathrm b}]=d_{\partial_0f}^{\mathrm b}.
\end{equation}
It is enough to check this on the letters \(b_a\).  The index-raising maps \(\partial_i\), \(i\geq1\), of \cite{Bu1}*{Definition~3.8} leave every occurrence of \(b_0\) in place, and hence commute with \(\partial_0\).  The same is therefore true for the operators \(\delta_j\) built from them.  Likewise, the operators \(\mathrm{lc}_j^{(a,r)}\) of \cite{Bu1}*{Definition~3.10} only replace one positive-index letter by a string of positive-index letters, and so also commute with \(\partial_0\).  Applying \(\partial_0\) to the formula for \(d_w^{q}(b_a)\) in \cite{Bu1}*{Definition~3.12}, and using \(\partial_0(b_a)=0\), therefore gives
\[
 \partial_0\bigl(d_w^{q}(b_a)\bigr)
 =d_{\partial_0w}^{q}(b_a)\qquad(a\geq1).
\]
For a depth-zero word \(w=b_0^m\), this is the immediate identity \(\partial_0[b_0^m,b_a]=m[b_0^{m-1},b_a]\), using the separate depth-zero clause in \cite{Bu1}*{Definition~3.12}.  On \(b_0\), both sides vanish.  This proves \eqref{eq:partial-db} by linearity.  Applying it to the definition of \(\{f,g\}_{\mathrm b}\), and using that \(\partial_0\) is a derivation for the commutator, gives \eqref{eq:partial-b-bracket}.
\end{proof}

\begin{lem}\label{lem:b-bracket-uri-comparison}
For \(f,g\in\mathfrak{bm}_0\), one has
\begin{equation}\label{eq:b-bracket-uri-comparison}
 \Phi(\{f,g\}_{\mathrm b})=\uri(\Phi(f),\Phi(g)).
\end{equation}
\end{lem}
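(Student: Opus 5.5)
The plan is to reduce \eqref{eq:b-bracket-uri-comparison} to Burmester's derivation formula, which compares the bimould realization of \(d_f^{\mathrm b}\) with a flexion operator. Concretely, \cite{Bu1} relates \(\rho_B\) of \(d^q_f(g)\) and of concatenation products to \'Ecalle-type operators: concatenation \([f,g]\) goes to a \(\muop\)-commutator (after the shift \(\#_Y\)), and \(d_f^{q}\) goes to an \(\arit\)-type derivation. The first step is to recall that \cite{Bu1}*{Section~5} establishes, for elements \(f,g\) whose \(b_0\)-initial coefficients are governed by \(\ker(\partial_0)\) (i.e.\ \(f=\operatorname{sec}_q(\Pi_0 f)\)), an identity of the form
\[
 \Phi\bigl(d^{\mathrm b}_f(g)-d^{\mathrm b}_g(f)+[f,g]\bigr)
 =\uri\bigl(\Phi(f),\Phi(g)\bigr)
\]
up to her sign conventions, i.e.\ the statement that her uri is the bimould shadow of \(\{\ ,\ \}_{\mathrm b}\) on \(\ker(\partial_0)\); the footnote already records that her ari and uri are the negatives of ours and her bracket is the negative of \(\{\ ,\ \}_{\mathrm b}\), so the two sign changes cancel. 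I would verify this sign bookkeeping explicitly in depth two on a small example (e.g.\ \(f,g\) of low weight from \Cref{sec:examples}) as a sanity check.

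Second, I must handle the fact that \(\Phi\) forgets words beginning with \(b_0\). Since \(f,g\in\mathfrak{bm}_0\subseteq\ker(\partial_0)\) by \eqref{eq:bm0-partial-kernel}, and \(\ker(\partial_0)\) is closed under \(\{\ ,\ \}_{\mathrm b}\) by \cref{lem:partial-b-bracket}, the bracket \(\{f,g\}_{\mathrm b}\) also lies in \(\ker(\partial_0)\) and hence equals \(\operatorname{sec}_q\Pi_0\{f,g\}_{\mathrm b}\). The key point is then that \(\Pi_0\{f,g\}_{\mathrm b}\) depends only on \(\Pi_0 f,\Pi_0 g\) when \(f,g\in\ker(\partial_0)\): the contributions of words \(b_0w\) in \(f\) or \(g\) to non-\(b_0\)-initial words of the bracket are expressible through \(\partial_0\) and the section formula, so the derivation formula of \cite{Bu1} computed on \(\Pi_0\)-parts, with these correction terms, is exactly the bimould computation producing \(\uri\).

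Third, to identify the bimould side with our \(\uri\) rather than a pole-bearing formula, I use \cref{thm:regularity} and \eqref{eq:tangent-uri}: \(\uri(a,b)=-[\eps\eta](A\diamond B-B\diamond A)\), whose explicit form via \eqref{eq:diamond-direct} and the divided differences \eqref{eq:tilde-recursion} is a polynomial flexion formula. I would match term by term: the final \(\muop\)-block \(A^{(0)}\langle r_j+1,r\rangle\) with concatenation \([f,g]\), and the marked-letter terms with translated blocks and divided differences with the letters \(\mathrm{lc}_j^{(a,r)}\) and index-raising operators \(\partial_i,\delta_j\) in \(d_f^q(b_a)\), through \(\rho_B\) and the substitution \(\#_Y\) (which converts \(Y_{p:q}\)-absorption into the lower-variable pattern of \(\rho_B\)). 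The main obstacle is this matching: Burmester's formula is stated in word combinatorics and a sign convention differing from ours, and the divided differences \((F(0)-F(X_i))/X_i\) must be shown to correspond exactly to the \(\mathrm{lc}\) operators. I would first try to cite \cite{Bu1}'s derivation formula (her comparison of \(d^q\) with her \(\arit\)-analogue for uri) directly and only check signs, falling back to the explicit depth-by-depth comparison if her statement is only for \(\ker(\partial_0)\) modulo \(b_0\)-initial words.
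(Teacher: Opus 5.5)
There is a genuine gap, and it sits exactly where the paper's proof does its real work. In your first step you assume that the bimould bracket which \cite{Bu1} relates to \(\{\ ,\ \}_{\mathrm b}\) is, up to sign, the \(\uri\) of \cref{def:uri}. That is not what you can quote. \cite{Bu1}*{Theorem~5.52} gives \(\Phi(s_f(g))=\operatorname{preuri}_{\mathrm{Bu}}(\Phi(f),\Phi(g))\) with \(s_f(g)=d_f^q(g)+gf\). Here \(\operatorname{preuri}_{\mathrm{Bu}}\) is built from Burmester's explicit divided-difference operator \(\operatorname{urit}_A\) of \cite{Bu1}*{Definitions~5.12 and~5.15}. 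The \(\uri\) of this paper is instead \(T\circ\ari\circ(T^{-1}\times T^{-1})\) with \(T=\ganit_{\pic}\), a formula with poles.

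That these two brackets agree up to sign is precisely what must be proved, and the paper derives it inside this proof rather than quoting it. The footnote's sign statement therefore cannot serve as an input. If the agreement were already available, the pole cancellation of \cref{thm:regularity} would follow at once from Burmester's manifestly polynomial formula. Your sign bookkeeping is correct, but it rests on this unproved identification. Your second step is also not where the difficulty lies: the \(b_0\)-initial words are absorbed into the citation of Theorem~5.52, and \(\ker(\partial_0)\) is needed only later, in the proof of \cref{thm:B}.

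Your third step rightly points to \(-[\eps\eta](A\diamond B-B\diamond A)\) and \eqref{eq:tangent-uri}. However, you aim the comparison at the word operators \(\mathrm{lc}_j^{(a,r)}\), \(\partial_i\), \(\delta_j\) and give no mechanism, as you acknowledge. The missing idea is a purely bimould identity, with Theorem~5.52 serving as the bridge to words:
\begin{enumerate}[(i)]
\item Write \(\operatorname{urit}_A=\arit(A)+\anit(C(A))\), where \(C(A)_m=\sum_{r=1}^{m-1}(L_{m,r}(A)-R_{m,r}(A))\) collects Burmester's two divided-difference sums.
\item Using the recursion of divided differences in their nodes, show that \(C(A)\) satisfies the same depth recursion as \(A+\lambda(A)\). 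Here \(\lambda(A)=[\eps]\widetilde{\one+\eps A}\) is the linearization of \eqref{eq:tilde-recursion}.
\item Conclude \(C(A)=A+\lambda(A)\), hence \(\operatorname{urit}_A=\axit(A,\lambda(A))\), which is exactly the linearization of \(\gaxit_{A,\widetilde A}\).
\item Linearizing \eqref{eq:diamond-gaxit} then gives \([\eps\eta]\bigl((\one+\eps A)\diamond(\one+\eta B)\bigr)=\operatorname{preuri}_{\mathrm{Bu}}(A,B)\).
\item Antisymmetrizing with \eqref{eq:tangent-uri} gives \(\operatorname{preuri}_{\mathrm{Bu}}(B,A)-\operatorname{preuri}_{\mathrm{Bu}}(A,B)=\uri(A,B)\). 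Since \(\{f,g\}_{\mathrm b}=s_g(f)-s_f(g)\), Theorem~5.52 then yields the lemma.
\end{enumerate}
Without identifying Burmester's correction term with the linearized map \(A\mapsto\widetilde A\), your argument never reaches the \(\uri\) of \cref{def:uri}.
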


\begin{proof}
Put \(s_f(g)=d_f^{q}(g)+gf\), so that \(\{f,g\}_{\mathrm b}=s_g(f)-s_f(g)\).  With \(A=\Phi(f)\) and \(B=\Phi(g)\), \cite{Bu1}*{Theorem~5.52} gives
\begin{equation}\label{eq:b-preuri}
 \Phi(s_f(g))=\operatorname{preuri}_{\mathrm{Bu}}(A,B),
\end{equation}
where the right-hand side is the preuri operation of \cite{Bu1}*{Definition~5.15}.  We compare it with the linearization of \(\diamond\).  For a bimould \(F\), write
\begin{align*}
 F^{\mathrm t}
 &=F_{m-1}\bi{X_2,\ldots,X_m}{Y_2,\ldots,Y_m},&
 F^{\mathrm{t},1}
 &=F_{m-1}\bi{X_2-X_1,\ldots,X_m-X_1}{Y_2,\ldots,Y_m},\\
 F^{\mathrm h}
 &=F_{m-1}\bi{X_1,\ldots,X_{m-1}}{Y_1,\ldots,Y_{m-1}},&
 F^{\mathrm{h},m}
 &=F_{m-1}\bi{X_1-X_m,\ldots,X_{m-1}-X_m}
 {Y_1,\ldots,Y_{m-1}}.
\end{align*}
Put \(\lambda(A)=[\eps]\widetilde{\one+\eps A}\).  Linearizing \eqref{eq:tilde-recursion} gives
\begin{equation}\label{eq:lambda-preuri}
 \lambda(A)_0=0,\qquad \lambda(A)_1=-A_1,\qquad
 \lambda(A)_m=-A_m
 +\frac{\lambda(A)^{\mathrm t}-\lambda(A)^{\mathrm{t},1}}{X_1}
 +\frac{A^{\mathrm h}-A^{\mathrm{h},m}}{X_m}.
\end{equation}

For a polynomial \(F\) and nodes \(z_0,\ldots,z_r\), put
\[
 \mathcal D_r(F\mid z_0,\ldots,z_r)
 =\sum_{s=0}^r\frac{F(z_s)}{\prod_{k\ne s}(z_k-z_s)}.
\]
Define \(C(A)_0=C(A)_1=0\) and, for \(m\geq2\),
\[
 C(A)_m=\sum_{r=1}^{m-1}\bigl(L_{m,r}(A)-R_{m,r}(A)\bigr),
\]
where
\begin{align}
 L_{m,r}(A)
 &=\mathcal D_r\left(
 t\longmapsto
 A_{m-r}\bi{X_r-t,\ldots,X_{m-1}-t}
 {Y_r,\ldots,Y_{m-1}}
 \mathrel\big|0,X_1,\ldots,X_{r-1},X_m
 \right),\label{eq:Lmr-preuri}\\
 R_{m,r}(A)
 &=\mathcal D_r\left(
 t\longmapsto
 A_{m-r}\bi{X_{r+1}-t,\ldots,X_m-t}
 {Y_{r+1},\ldots,Y_m}
 \mathrel\big|0,X_1,\ldots,X_r
 \right).\label{eq:Rmr-preuri}
\end{align}
Using the marked-letter formula for \(\anit\), the two divided-difference sums in \cite{Bu1}*{Definition~5.12} can be written in flexion notation as
\begin{equation}\label{eq:urit-C-preuri}
 \operatorname{urit}_A=\arit(A)+\anit(C(A)).
\end{equation}
When a block of length \(r\) is inserted after a marked position \(i\), the operator \(\anit\) evaluates \(C(A)_r\) at
\[
 C(A)_r\bi{X_{i+1}-X_i,\ldots,X_{i+r}-X_i}
 {Y_{i+1},\ldots,Y_{i+r}}.
\]
Substitution of \eqref{eq:Lmr-preuri} and \eqref{eq:Rmr-preuri} gives, term by term, the two sums in that definition, with \(L_{m,r}\) and \(R_{m,r}\) corresponding to the two possible sides of the marked block.

The operator \(\mathcal D_r\) is symmetric in its nodes and satisfies
\begin{equation}\label{eq:divided-difference-preuri}
 \mathcal D_r(F\mid z_0,\ldots,z_r)
 =\frac{\mathcal D_{r-1}(F\mid z_1,\ldots,z_r)
 -\mathcal D_{r-1}(F\mid z_0,\ldots,z_{r-1})}{z_0-z_r}.
\end{equation}
For \(r=1\), definitions \eqref{eq:Lmr-preuri} and \eqref{eq:Rmr-preuri} give
\[
 L_{m,1}(A)=\frac{A^{\mathrm h}-A^{\mathrm{h},m}}{X_m},
 \qquad
 R_{m,1}(A)=\frac{A^{\mathrm t}-A^{\mathrm{t},1}}{X_1}.
\]
For \(r\geq2\), applying \eqref{eq:divided-difference-preuri} to the tail and translated-tail node sets gives
\[
 L_{m,r}(A)
 =\frac{L_{m-1,r-1}(A)^{\mathrm t}
       -L_{m-1,r-1}(A)^{\mathrm{t},1}}{X_1},
 \qquad
 R_{m,r}(A)
 =\frac{R_{m-1,r-1}(A)^{\mathrm t}
       -R_{m-1,r-1}(A)^{\mathrm{t},1}}{X_1}.
\]
Here the superscripts mean evaluation in the corresponding tail and translated-tail variables.  Summing these identities gives
\begin{equation}\label{eq:C-preuri}
 C(A)_1=0,\qquad
 C(A)_m
 =\frac{C(A)^{\mathrm t}-C(A)^{\mathrm{t},1}}{X_1}
 +\frac{A^{\mathrm{t},1}-A^{\mathrm t}}{X_1}
 +\frac{A^{\mathrm h}-A^{\mathrm{h},m}}{X_m}.
\end{equation}
Comparing \eqref{eq:lambda-preuri} and \eqref{eq:C-preuri} inductively gives
\begin{equation}\label{eq:C-lambda-preuri}
 C(A)=A+\lambda(A).
\end{equation}
Let \(\axit(P,Q)\) denote the linearization
\[
 \gaxit_{\one+\eps P,\one+\eps Q}
 =\operatorname{id}+\eps\axit(P,Q).
\]
Then \(\anit(Q)=\axit(0,Q)\) and \(\arit(P)=\axit(P,-P)\).  By \eqref{eq:urit-C-preuri} and \eqref{eq:C-lambda-preuri},
\[
 \operatorname{urit}_A
 =\axit(A,C(A)-A)=\axit(A,\lambda(A)).
\]
Linearizing \eqref{eq:diamond-gaxit} now gives
\begin{equation}\label{eq:diamond-preuri}
 [\eps\eta]\bigl((\one+\eps A)\diamond(\one+\eta B)\bigr)
 =\operatorname{preuri}_{\mathrm{Bu}}(A,B),
\end{equation}
since the \(\gaxit\) factor contributes \(\operatorname{urit}_A(B)\) and the last factor contributes \(\muop(B,A)\).  Antisymmetrizing \eqref{eq:diamond-preuri} and applying \eqref{eq:tangent-uri} yields
\[
 \operatorname{preuri}_{\mathrm{Bu}}(B,A)
 -\operatorname{preuri}_{\mathrm{Bu}}(A,B)=\uri(A,B).
\]
Since \(\{f,g\}_{\mathrm b}=s_g(f)-s_f(g)\), this together with \eqref{eq:b-preuri} proves \eqref{eq:b-bracket-uri-comparison}.
\end{proof}

\begin{proof}[Proof of \cref{thm:B}]
Let \(f,g\in\mathfrak{bm}_0\).  By \cref{thm:A}, the right-hand side of \eqref{eq:b-bracket-uri-comparison} lies in \(\mathfrak B\).  The isomorphism \eqref{eq:bm0-bari-isomorphism} therefore gives a unique \(h\in\mathfrak{bm}_0\) such that
\[
 \Phi(h)=\Phi(\{f,g\}_{\mathrm b}).
\]
Both \(h\) and \(\{f,g\}_{\mathrm b}\) lie in \(\ker(\partial_0)\) by \eqref{eq:bm0-partial-kernel} and \cref{lem:partial-b-bracket}.  Since \(\#_Y\) is invertible and \(\rho_B\) records every coefficient of \(\Pi_0\), the last display implies \(\Pi_0(h)=\Pi_0(\{f,g\}_{\mathrm b})\).  The injectivity of \(\Pi_0|_{\ker(\partial_0)}\) now gives \(h=\{f,g\}_{\mathrm b}\), proving closure.  The bracket already satisfies antisymmetry and the Jacobi identity on the ambient space by \cite{Bu1}*{Theorem~3.20}, and it preserves weight.  Therefore \(\mathfrak{bm}_0\) is a weight-graded Lie algebra.  Equation \eqref{eq:b-bracket-uri-comparison} shows that \(\Phi\) is a Lie algebra isomorphism.
\end{proof}

\begin{rem}\label{rem:hopf}
In \cite{BB2} the first author and Burmester construct a coproduct on \(\mathcal G^{\!f}\) modulo quasimodular forms, which turns this quotient into a connected graded Hopf algebra.  Conjecturally this Hopf algebra is \(\mathcal U(\mathfrak{bm}_0)^\vee\) from \eqref{eq:Gf-conjecture}.  It might be interesting to describe this coproduct directly on the space of bimoulds, where \cref{thm:A} now provides the Lie bracket, and to work out the whole Hopf algebra structure there.
\end{rem}

\section{Explicit elements in low weight}\label{sec:examples}

In this section we describe $\mathfrak B$ in low weight.  Recall that $\mathfrak B$ is the space of finite-depth polynomial elements of $\BARIswapil$, and that it is a Lie algebra under uri by \cref{thm:A}.  As in the proof of \cref{thm:regularity}, a monomial in depth $r$ of ordinary degree $d$ has weight $r+d$, and we write $\mathfrak B_k$ for the subspace of homogeneous elements of weight $k$.  Alternility, swap invariance and the depth-one parity are homogeneous conditions, so that
\[
 \mathfrak B=\bigoplus_{k\geq1}\mathfrak B_k,
 \qquad
 \uri(\mathfrak B_k,\mathfrak B_l)\subseteq\mathfrak B_{k+l}.
\]
For fixed $k$, the space $\mathfrak B_k$ is cut out by finitely many linear equations in the coefficients \eqref{eq:coefficient-expansion} of weight $k$, and can therefore be computed exactly.  For $k\leq9$ we obtain
\[
\begin{array}{c|ccccccccc}
 k & 1&2&3&4&5&6&7&8&9\\ \hline
 \dim\mathfrak B_k & 1&0&2&1&4&3&8&11&18
\end{array}
\]

\subsection*{The elements \texorpdfstring{$\xi_k^m$}{xi(k,m)}}

Let $E$ be the bimould with
\[
 E_1\bi{X_1}{Y_1}=-\frac{X_1+Y_1}2,
 \qquad
 E_2\bi{X_1,X_2}{Y_1,Y_2}=\frac14,
\]
and $E_r=0$ for $r\geq3$, and let $Q$ be the operator which multiplies the depth-$r$ component of a bimould by $X_1Y_1+\cdots+X_rY_r$.  In \cite{Ba} it is shown that
\begin{equation}\label{eq:delta}
 \delta(A)=T\bigl(Q(T^{-1}A)\bigr)+\uri(A,E)
\end{equation}
is a derivation of $(\mathfrak B,\uri)$ which raises the weight by two.  Since $T$ and $\uri$ do not change the depth-one component, $\delta$ is multiplication by $X_1Y_1$ in depth one.

The corresponding operator on $\mathfrak{bm}_0$ is
\[
 \delta_{\mathrm b}:=\Phi^{-1}\circ\delta\circ\Phi.
\]
By \cref{thm:B}, it is a derivation for $\{\ ,\ \}_{\mathrm b}$ and raises the weight by two.  For example,
\[
 \delta_{\mathrm b}(b_1)
 =b_2b_0-b_0b_2+b_1b_2-b_2b_1-b_1^2b_0+b_1b_0b_1.
\]
This is the element in \cite{Bu1}*{Example~4.53}.

Fix a normalized injection in \cite{Bu1}*{Equation~(B.32.1)}, and, for every odd $k\geq3$, denote the image of its weight-$k$ generator by $\sigma_k\in\mathfrak{dm}_0$, normalized by $(\sigma_k\mid x_0^{k-1}x_1)=1$.  Put
\begin{equation}\label{eq:xi-generators}
 f_1=b_1,\qquad f_k=\theta(\sigma_k)\quad(k\geq3\text{ odd}),
 \qquad \xi_k=\Phi(f_k)\quad(k\geq1\text{ odd}).
\end{equation}
Thus $(\xi_1)_1=1$ and $(\xi_1)_r=0$ for $r\ne1$.  The choices of $\sigma_k$, and hence of $\xi_k$, are noncanonical in general.  One may instead use the canonical zeta generators constructed by Dorigoni, Doroudiani, Drewitt, Hidding, Kleinschmidt, Schlotterer, Schneps and Verbeek \cite{DDDHKSSV}.  For $k=2n+1$ with $1\leq n\leq4$, an exact computation shows that $\xi_k$ is the unique element of $\mathfrak B_k$ of the form
\[
 \xi_k=M+\swapop(M),
\]
where the components of $M$ depend only on the lower variables $Y_1,\ldots,Y_r$ and $M_1=Y_1^{k-1}$.  In each case the depth-$k$ component of $\xi_k$ is the constant $1/k$.  Apart from its constant depth-$k$ component, $M$ is an alternal mould in the lower variables whose swap is alternil up to a constant, i.e.\ an element of weight $k$ of \'{E}calle's Lie algebra $\ARI_{\underline{\mathrm{al}}*\underline{\mathrm{il}}}$ \cite{Sc}.  For $m\geq0$ write
\[
 f_k^m:=\delta_{\mathrm b}^m(f_k),
 \qquad
 \xi_k^m:=\delta^m(\xi_k)=\Phi(f_k^m),
\]
and omit the superscript when $m=0$.  Notice that $\xi_k^m$ has weight $k+2m$.  Its depth-one component is $(X_1Y_1)^m(X_1^{k-1}+Y_1^{k-1})$ for $k\geq3$ and $(X_1Y_1)^m$ for $k=1$.  In low weight these elements generate $\mathfrak B$ (see \cref{prop:low-weight}).

\begin{prop}\label{prop:low-weight}
The following statements hold.
\begin{enumerate}[(i)]
\item The elements $\xi_k^m$, with $k\geq1$ odd, $m\geq0$, and $k+2m\leq9$, are well defined and lie in $\mathfrak B$.
\item For $k\leq9$, the space $\mathfrak B_k$ is spanned by the iterated uri brackets of the elements $\xi_\ell^m$ with $\ell+2m\leq k$.
\item For every $n\geq1$, one has
\begin{equation}\label{eq:eisenstein-relations}
 \uri(\xi_1,\xi_{2n+1})=0.
\end{equation}
Applying $\delta^m$ gives, for every $m\geq1$,
\begin{equation}\label{eq:differentiated-eisenstein-relations}
 \sum_{i=0}^m\binom mi
 \uri\bigl(\delta^i(\xi_1),\delta^{m-i}(\xi_{2n+1})\bigr)=0.
\end{equation}
Every linear relation among the brackets $\uri(\xi_k^r,\xi_\ell^s)$ of weight at most $8$ is a linear combination of the specializations of \eqref{eq:eisenstein-relations} and \eqref{eq:differentiated-eisenstein-relations} in those weights.
\end{enumerate}
\end{prop}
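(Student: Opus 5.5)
This proposition is essentially computational, and I would prove it by combining the structural results above with exact finite linear algebra in each weight.

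\emph{Part (i).} For $k\geq3$ odd with $k\leq9$, $\sigma_k\in\mathfrak{dm}_0$ exists by the cited normalized injection. Hence $f_k=\theta(\sigma_k)\in\mathfrak{bm}_0$ by \cite{Bu1}*{Theorem~4.28}, and $\xi_k=\Phi(f_k)\in\mathfrak B_k$ by \eqref{eq:bm0-bari-isomorphism}. For $k=1$, I check directly that $b_1$ satisfies the defining conditions of $\mathfrak{bm}_0$: it is primitive, $\tau(b_1)=b_1$, and the coefficient conditions on $b_0,b_2,b_4,b_6$ are vacuous. Then $\xi_1^m=\delta^m(\xi_1)$ lies in $\mathfrak B$ because $\delta$ is a derivation of $(\mathfrak B,\uri)$ raising weight by two, as shown in \cite{Ba}. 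Well-definedness of $\delta(A)$ as a polynomial bimould has two ingredients. The term $\uri(A,E)$ is regular by \cref{thm:regularity}. For the term $T(Q(T^{-1}A))$ I would simply invoke \cite{Ba}. The explicit descriptions $\xi_k=M+\swapop(M)$ are verified by solving the finitely many linear equations of weight $k$.

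\emph{Part (ii).} I compute $\dim\mathfrak B_k$ for $k\leq9$ by writing the general weight-$k$ element, with components of depth $r\leq k$ and degree $k-r$, and imposing alternility, swap invariance and the depth-one parity. This is a finite rational linear system, and it should give the table. Next I compute all iterated uri brackets of the $\xi_\ell^m$ of total weight $k$. Weight is additive by \cref{thm:regularity}, so only finitely many brackets occur. Each bracket is evaluated through the regular formula \eqref{eq:tangent-uri} using $\diamond$, which avoids poles. Finally I check that the rank of these brackets equals $\dim\mathfrak B_k$. For $k=2$ the claim is trivially true since the space is zero. For $k=4$ the only candidates are $\xi_1^1$-type brackets, and $\delta(\xi_3)$ has weight $5$. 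So I expect $\mathfrak B_4$ to be spanned by $\uri(\xi_1,\xi_3)$, $\xi_1^1$-brackets, and so on, but this still has to be confirmed by the computation.

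\emph{Part (iii).} For \eqref{eq:eisenstein-relations} a finite check does not suffice, because $n$ ranges over all $n\geq1$. Since $\xi_1$ is concentrated in depth one with value $1$, I would compute $\uri(\xi_1,B)$ in closed form for a general $B$. With $A=\one+\eps\xi_1$ we have $\lambda(\xi_1)_1=-1$, and \eqref{eq:lambda-preuri} gives the higher $\lambda$-components recursively. I expect the resulting operator to be explicitly $B\mapsto$ a combination of divided differences of $B$. I would then show that this operator vanishes on elements of the form $\theta(\sigma)$, for instance by transporting to $\mathfrak{bm}_0$, where $\{b_1,\theta(\sigma)\}_{\mathrm b}=0$ should follow from Burmester's description of $\theta$ together with \cref{lem:b-bracket-uri-comparison}. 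This general-$n$ identity is the main obstacle. If no structural argument works, I would fall back to an argument on $\mathfrak{dm}_0$ via the explicit form of $\theta$.

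Once \eqref{eq:eisenstein-relations} holds, \eqref{eq:differentiated-eisenstein-relations} follows by applying $\delta^m$ and the Leibniz rule for the derivation $\delta$. The completeness statement in weights at most $8$ is again finite linear algebra. I compute the kernel of the map from the formal span of the pairs $(\xi_k^r,\xi_\ell^s)$ to $\mathfrak B$, and compare it with the span of the listed relations.
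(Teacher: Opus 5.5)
Your treatment of (i), of (ii), of the derivation of \eqref{eq:differentiated-eisenstein-relations} from \eqref{eq:eisenstein-relations} by the Leibniz rule for $\delta$, and of the completeness check in weight at most $8$ matches the paper. The paper gets membership via $\theta$, $\Phi$ and the derivation $\delta$, and handles the rest by exact linear algebra. One small slip: by (iii) itself, $\uri(\xi_1,\xi_3)=0$, so the one-dimensional $\mathfrak B_4$ must be spanned by $\uri(\xi_1,\xi_1^1)$, not by $\uri(\xi_1,\xi_3)$.

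The genuine gap is the identity \eqref{eq:eisenstein-relations} for all $n\geq1$. You rightly note that no finite computation proves it, but you only name it as the main obstacle and sketch two routes, neither of which is carried out.
\begin{itemize}
\item The first route, a closed form for $\uri(\xi_1,\cdot)$ via $\lambda$ that would then vanish on $\Phi(\theta(\sigma))$, needs explicit control of $\theta(\sigma_k)$ for an arbitrary, noncanonical $\sigma_k\in\mathfrak{dm}_0$. That control is not available.
\item The second route says that $\{b_1,\theta(\sigma)\}_{\mathrm b}=0$ ``should follow from Burmester's description of $\theta$''. It does not follow from that description alone. This is exactly why \cite{Bu1}*{Proposition~4.56} was conditional.
\end{itemize}

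The missing idea is to use the closure you have just proved in \cref{thm:B}, together with the $\tau$-symmetry. Fix an odd $k\geq3$ and put $h=\{f_k,f_1\}_{\mathrm b}$.
\begin{enumerate}
\item By the construction of $\theta$, the support of $f_k$ splits into words in $b_0,b_1$ and words containing no $b_0$.
\item The first part has zero bracket with $b_1$ by \cite{Bu1}*{Lemma~3.15(i)}. The derivation formula preserves the span of $b_0$-free words. Hence $h$ contains no $b_0$.
\item By \cref{thm:B}, $h\in\mathfrak{bm}_0$. So $h$ is primitive and $\Pi_0(h)=h$ is $\tau$-invariant.
\item By \eqref{eq:balanced-tau}, $\tau$ sends every $b_0$-free word other than a power of $b_1$ to a word containing $b_0$. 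So $h\in\QQ\, b_1^{k+1}$.
\item Since $b_1^{k+1}$ is not primitive, $h=0$.
\item Then \cref{lem:b-bracket-uri-comparison} gives $\uri(\xi_1,\xi_k)=\Phi(\{f_1,f_k\}_{\mathrm b})=0$.
\end{enumerate}
Without the membership $h\in\mathfrak{bm}_0$, and hence the $\tau$-invariance of $\Pi_0(h)$, your outline has no mechanism that forces the bracket to vanish.
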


\begin{proof}
Part (i) follows from \eqref{eq:xi-generators} and the fact that $\delta$ preserves $\mathfrak B$.  Part (ii) and the completeness assertion in (iii) were verified by the exact computation described below.  We prove the general identity in (iii).  Fix an odd $k\geq3$ and put $h=\{f_k,f_1\}_{\mathrm b}$.  The construction of $\theta$ in \cite{Bu1}*{Theorem~4.28} splits the support of $f_k$ into words in $b_0,b_1$ and words containing no $b_0$.  The first part has zero bracket with $b_1$ by \cite{Bu1}*{Lemma~3.15(i)}, while the defining derivation formula preserves the subspace of words containing no $b_0$.  Hence $h$ contains no $b_0$.

By \cref{thm:B}, $h\in\mathfrak{bm}_0$, so it is primitive and its projection is $\tau$-invariant.  Since $h$ contains no $b_0$, $\Pi_0(h)=h$.  Formula \eqref{eq:balanced-tau} sends every word without $b_0$ other than a power of $b_1$ to a word containing $b_0$.  So a homogeneous $\tau$-invariant element of weight $k+1$ containing no $b_0$ is a multiple of $b_1^{k+1}$.  Since $b_1^{k+1}$ is not primitive, $h=0$.  This is the argument of \cite{Bu1}*{Proposition~4.56}, whose closure hypothesis is now provided by \cref{thm:B}.  Finally, \cref{lem:b-bracket-uri-comparison} gives \(\uri(\xi_1,\xi_k)=0\).  Equation \eqref{eq:differentiated-eisenstein-relations} follows because $\delta$ is a derivation of the uri bracket.
\end{proof}

The computations were done in SageMath with exact rational arithmetic.  The spaces $\mathfrak B_k$ were computed from the linear conditions on the coefficients, and the uri bracket was evaluated with the polynomial formula \eqref{eq:tangent-uri}.  The first term in \eqref{eq:delta} was evaluated with the contraction formula of \cite{Ba}, which also has no poles.  For weight at most five, both terms were compared with a direct evaluation of the localized formulas \eqref{eq:uri} and \eqref{eq:delta}.

The elements of weight at most five are listed below, written as $\xi=((\xi)_1,(\xi)_2,\ldots)$ with the depth components in order, together with the corresponding elements $f_k^m=\Phi^{-1}(\xi_k^m)$ of $\mathfrak{bm}_0$.  The latter were obtained as in the proof of \cref{thm:B}: the coefficients of the words not beginning in $b_0$ are read off from $\rho_B(f_k^m)$, and the remaining coefficients are given by the section $\operatorname{sec}_q$.  The elements of weight seven and nine are considerably longer, for example $\xi_7$ has $127$ and $\xi_3^2$ has $322$ nonzero coefficients.

\begingroup\footnotesize
\smallskip\noindent\textit{Weight $1$.} Here $\xi_1=(1,0,\dots)$ and $f_1=b_1$.
\par\smallskip
\noindent\textit{Weight $3$.}
\begin{align*}
 \xi_3 &= \bigl(X_1^2 +Y_1^2, \quad X_1 -2X_2 -Y_1 +Y_2, \quad \tfrac{1}{3}, 0, \dots\bigr),\\
 f_3 &= b_3 -2b_1b_2 +b_2b_1 +b_0^2b_1 -2b_0b_1b_0 +b_0b_1^2 +b_1b_0^2 -2b_1b_0b_1 +b_1^2b_0 +\tfrac{1}{3}b_1^3.\\[1ex]
 \xi_1^1 &= \bigl(X_1Y_1, \quad -X_1 +X_2 -Y_2, 0, \dots\bigr),\\
 f_1^1 &= -b_0b_2 +b_1b_2 +b_2b_0 -b_2b_1 +b_1b_0b_1 -b_1^2b_0.
\end{align*}
\noindent\textit{Weight $5$.}
\begin{align*}
 \xi_5 &= \bigl(X_1^4 +Y_1^4, \quad 2X_1^3 -\tfrac{11}{2}X_1^2X_2 +\tfrac{9}{2}X_1X_2^2 -3X_2^3 -2Y_1^3 -\tfrac{1}{2}Y_1^2Y_2 +\tfrac{1}{2}Y_1Y_2^2 +2Y_2^3,\\
 &\quad 2X_1^2 -\tfrac{11}{2}X_1X_2 +\tfrac{9}{2}X_1X_3 -\tfrac{1}{2}X_2^2 +2X_2X_3 -\tfrac{1}{2}X_3^2 +2Y_1^2 -\tfrac{3}{2}Y_1Y_2 +3Y_1Y_3 -4Y_2^2\\
 &\quad -\tfrac{3}{2}Y_2Y_3 +2Y_3^2, \quad X_1 -4X_2 +6X_3 -4X_4 -Y_1 +3Y_2 -3Y_3 +Y_4, \quad \tfrac{1}{5}, 0, \dots\bigr),\\
 f_5 &= b_5 -3b_1b_4 +\tfrac{9}{2}b_2b_3 -\tfrac{11}{2}b_3b_2 +2b_4b_1 -\tfrac{1}{2}b_1^2b_3 +2b_1b_2^2 -\tfrac{1}{2}b_1b_3b_1 +\tfrac{9}{2}b_2b_1b_2\\
 &\quad -\tfrac{11}{2}b_2^2b_1 +2b_3b_1^2 -4b_1^3b_2 +6b_1^2b_2b_1 -4b_1b_2b_1^2 +b_2b_1^3 +b_0^4b_1 -4b_0^3b_1b_0\\
 &\quad +2b_0^3b_1^2 +6b_0^2b_1b_0^2 -\tfrac{11}{2}b_0^2b_1b_0b_1 -\tfrac{1}{2}b_0^2b_1^2b_0 +2b_0^2b_1^3 -4b_0b_1b_0^3\\
 &\quad +\tfrac{9}{2}b_0b_1b_0^2b_1 +2b_0b_1b_0b_1b_0 -\tfrac{11}{2}b_0b_1b_0b_1^2 -\tfrac{1}{2}b_0b_1^2b_0^2 +\tfrac{9}{2}b_0b_1^2b_0b_1\\
 &\quad -3b_0b_1^3b_0 +b_0b_1^4 +b_1b_0^4 -3b_1b_0^3b_1 +\tfrac{9}{2}b_1b_0^2b_1b_0 -\tfrac{1}{2}b_1b_0^2b_1^2 -\tfrac{11}{2}b_1b_0b_1b_0^2\\
 &\quad +2b_1b_0b_1b_0b_1 +\tfrac{9}{2}b_1b_0b_1^2b_0 -4b_1b_0b_1^3 +2b_1^2b_0^3 -\tfrac{1}{2}b_1^2b_0^2b_1 -\tfrac{11}{2}b_1^2b_0b_1b_0\\
 &\quad +6b_1^2b_0b_1^2 +2b_1^3b_0^2 -4b_1^3b_0b_1 +b_1^4b_0 +\tfrac{1}{5}b_1^5.\\[1ex]
 \xi_3^1 &= \bigl(X_1^3Y_1 +X_1Y_1^3, \quad -X_1^3 +\tfrac{3}{2}X_1^2X_2 -\tfrac{3}{2}X_1X_2^2 +X_2^3 +X_1^2Y_1 -X_1^2Y_2 -2X_1X_2Y_1\\
 &\quad +X_1X_2Y_2 -2X_2^2Y_2 -2X_1Y_1^2 -X_1Y_1Y_2 -X_1Y_2^2 +X_2Y_1^2 +X_2Y_1Y_2 +2X_2Y_2^2 -\tfrac{3}{2}Y_1^2Y_2\\
 &\quad -\tfrac{3}{2}Y_1Y_2^2 -Y_2^3, \quad -X_1^2 +\tfrac{1}{2}X_1X_2 +\tfrac{3}{2}X_1X_3 +\tfrac{3}{2}X_2^2 -2X_2X_3 -\tfrac{1}{2}X_3^2 +\tfrac{4}{3}X_1Y_1 -2X_1Y_3\\
 &\quad -\tfrac{5}{3}X_2Y_2 +2X_2Y_3 -X_3Y_1 +3X_3Y_2 +\tfrac{4}{3}X_3Y_3 +\tfrac{3}{2}Y_1Y_2 +Y_2^2 -\tfrac{3}{2}Y_2Y_3 -Y_3^2, \quad -\tfrac{1}{3}X_1\\
 &\quad +\tfrac{1}{3}X_4 -\tfrac{1}{3}Y_2 -\tfrac{1}{3}Y_3 -\tfrac{1}{3}Y_4, 0, \dots\bigr),\\
 f_3^1 &= -b_0b_4 +b_1b_4 -\tfrac{3}{2}b_2b_3 +\tfrac{3}{2}b_3b_2 +b_4b_0 -b_4b_1 +2b_0b_2^2 -b_0b_3b_1 +2b_1b_0b_3\\
 &\quad -\tfrac{1}{2}b_1^2b_3 -2b_1b_2^2 -2b_1b_3b_0 +\tfrac{3}{2}b_1b_3b_1 -3b_2b_0b_2 +\tfrac{3}{2}b_2b_1b_2 +b_2^2b_0 +\tfrac{1}{2}b_2^2b_1\\
 &\quad +2b_3b_0b_1 -b_3b_1b_0 -b_3b_1^2 -b_0^3b_2 +b_0^2b_1b_2 +3b_0^2b_2b_0 -2b_0^2b_2b_1 -b_0b_1b_0b_2\\
 &\quad +b_0b_1^2b_2 -b_0b_1b_2b_0 -3b_0b_2b_0^2 +3b_0b_2b_0b_1 +b_0b_2b_1b_0 -\tfrac{4}{3}b_0b_2b_1^2 +2b_1b_0^2b_2\\
 &\quad -4b_1b_0b_1b_2 -3b_1b_0b_2b_0 +\tfrac{5}{3}b_1b_0b_2b_1 +\tfrac{5}{3}b_1^2b_0b_2 +\tfrac{1}{3}b_1^3b_2 +\tfrac{4}{3}b_1^2b_2b_0\\
 &\quad +2b_1b_2b_0^2 -\tfrac{11}{3}b_1b_2b_0b_1 +2b_1b_2b_1b_0 +b_2b_0^3 -2b_2b_0^2b_1 +b_2b_0b_1b_0 +\tfrac{4}{3}b_2b_0b_1^2\\
 &\quad -b_2b_1b_0^2 +2b_2b_1b_0b_1 -2b_2b_1^2b_0 -\tfrac{1}{3}b_2b_1^3 +\tfrac{3}{2}b_0^2b_1b_0b_1 -\tfrac{3}{2}b_0^2b_1^2b_0\\
 &\quad -\tfrac{3}{2}b_0b_1b_0^2b_1 +\tfrac{3}{2}b_0b_1b_0b_1^2 +\tfrac{3}{2}b_0b_1^2b_0^2 -\tfrac{3}{2}b_0b_1^2b_0b_1 +b_1b_0^3b_1\\
 &\quad -\tfrac{3}{2}b_1b_0^2b_1b_0 -\tfrac{1}{2}b_1b_0^2b_1^2 +\tfrac{3}{2}b_1b_0b_1b_0^2 -2b_1b_0b_1b_0b_1 +\tfrac{3}{2}b_1b_0b_1^2b_0\\
 &\quad +\tfrac{1}{3}b_1b_0b_1^3 -b_1^2b_0^3 +\tfrac{3}{2}b_1^2b_0^2b_1 +\tfrac{1}{2}b_1^2b_0b_1b_0 -b_1^3b_0^2 -\tfrac{1}{3}b_1^4b_0.\\[1ex]
 \xi_1^2 &= \bigl(X_1^2Y_1^2, \quad -\tfrac{3}{2}X_1^2Y_1 +X_1X_2Y_1 -X_1X_2Y_2 +\tfrac{3}{2}X_2^2Y_2 +\tfrac{1}{2}X_1Y_1^2 -X_1Y_1Y_2 -X_2Y_1^2\\
 &\quad -X_2Y_1Y_2 -\tfrac{3}{2}X_2Y_2^2, \quad \tfrac{1}{2}X_1^2 +X_1X_2 -2X_1X_3 -X_2^2 +X_2X_3 +\tfrac{1}{2}X_3^2 -\tfrac{3}{4}X_1Y_1 +\tfrac{1}{2}X_1Y_2\\
 &\quad +\tfrac{3}{4}X_1Y_3 +\tfrac{5}{4}X_2Y_1 +2X_2Y_2 -\tfrac{1}{4}X_2Y_3 -\tfrac{1}{4}X_3Y_1 -2X_3Y_2 -\tfrac{5}{4}X_3Y_3 +\tfrac{1}{2}Y_2^2 +2Y_2Y_3\\
 &\quad +\tfrac{1}{2}Y_3^2, \quad \tfrac{1}{4}X_1 -\tfrac{1}{4}X_2 -\tfrac{1}{4}X_3 +\tfrac{1}{4}X_4 -\tfrac{1}{4}Y_2 +\tfrac{1}{4}Y_4, 0, \dots\bigr),\\
 f_1^2 &= b_0^2b_3 -b_0b_2^2 -2b_0b_3b_0 +\tfrac{3}{2}b_0b_3b_1 -\tfrac{3}{2}b_1b_0b_3 +\tfrac{1}{2}b_1^2b_3 +b_1b_2^2 +\tfrac{3}{2}b_1b_3b_0\\
 &\quad -b_1b_3b_1 +2b_2b_0b_2 -2b_2b_1b_2 -b_2^2b_0 +b_2^2b_1 +b_3b_0^2 -\tfrac{3}{2}b_3b_0b_1 +\tfrac{1}{2}b_3b_1^2\\
 &\quad -b_0^2b_1b_2 +\tfrac{1}{2}b_0^2b_2b_1 +b_0b_1b_0b_2 +\tfrac{1}{4}b_0b_1^2b_2 +b_0b_1b_2b_0 -\tfrac{5}{4}b_0b_1b_2b_1\\
 &\quad -2b_0b_2b_0b_1 +b_0b_2b_1b_0 +\tfrac{3}{4}b_0b_2b_1^2 -\tfrac{3}{2}b_1b_0^2b_2 +\tfrac{7}{4}b_1b_0b_1b_2 +2b_1b_0b_2b_0\\
 &\quad -\tfrac{3}{4}b_1b_0b_2b_1 -\tfrac{3}{4}b_1^2b_0b_2 +\tfrac{1}{4}b_1^3b_2 -\tfrac{5}{4}b_1^2b_2b_0 -\tfrac{1}{4}b_1^2b_2b_1 -\tfrac{3}{2}b_1b_2b_0^2\\
 &\quad +\tfrac{9}{4}b_1b_2b_0b_1 -\tfrac{1}{4}b_1b_2b_1b_0 -\tfrac{1}{4}b_1b_2b_1^2 +\tfrac{3}{2}b_2b_0^2b_1 -b_2b_0b_1b_0 -\tfrac{5}{4}b_2b_0b_1^2\\
 &\quad -\tfrac{1}{4}b_2b_1b_0b_1 +\tfrac{3}{4}b_2b_1^2b_0 +\tfrac{1}{4}b_2b_1^3 +\tfrac{1}{2}b_1b_0^2b_1^2 +b_1b_0b_1b_0b_1 -2b_1b_0b_1^2b_0\\
 &\quad +\tfrac{1}{4}b_1b_0b_1^3 -b_1^2b_0^2b_1 +b_1^2b_0b_1b_0 -\tfrac{1}{4}b_1^2b_0b_1^2 +\tfrac{1}{2}b_1^3b_0^2 -\tfrac{1}{4}b_1^3b_0b_1\\
 &\quad +\tfrac{1}{4}b_1^4b_0.
\end{align*}
\endgroup

\medskip
\noindent\textbf{Use of AI tools.}  ChatGPT and Claude were used for exploratory computations, language editing, and the development of the fixed-point approach to swap invariance in \Cref{sec:swap-problem}.  The authors verified all statements and proofs and take full responsibility for the article.


\begin{thebibliography}{99}

\bibitem[Ba]{Ba} H.~Bachmann, \textit{An $\mathfrak{sl}_2$-action on the uri Lie algebra}, in preparation.

\bibitem[BB1]{BB1} H.~Bachmann and A.~Burmester, \textit{Combinatorial multiple Eisenstein series}, Res. Math. Sci. \textbf{10} (2023), no.~3, Paper No.~35, 32~pp.

\bibitem[BB2]{BB2} H.~Bachmann and A.~Burmester, \textit{A coproduct for multiple Eisenstein series}, in preparation.

\bibitem[BIM]{BIM} H.~Bachmann and J.-W.~van Ittersum, \textit{Formal multiple Eisenstein series and their derivations}, with an appendix by N.~Matthes, Adv. Math. \textbf{487} (2026), Paper No.~110739, 52~pp.

\bibitem[BK]{BK} H.~Bachmann and U.~K\"uhn, \textit{A dimension conjecture for $q$-analogues of multiple zeta values}, in: Periods in Quantum Field Theory and Arithmetic, Springer Proc. Math. Stat. \textbf{314}, Springer, Cham, 2020, pp.~237--258.

\bibitem[BKM]{BKM} H.~Bachmann, H.~Kanno and T.~Maesaka, \textit{Relations and derivatives of multiple Eisenstein series}, \href{https://arxiv.org/abs/2602.08176}{arXiv:2602.08176}.

\bibitem[BuK]{BuK} A.~Burmester and U.~K\"uhn, \textit{On post-Lie structures for free Lie algebras}, \href{https://arxiv.org/abs/2504.19661}{arXiv:2504.19661}.

\bibitem[BKS]{BKS} A.~Burmester, U.~K\"uhn and L.~Schneps, \textit{Moulds, bimoulds and some Lie algebras}, in preparation.

\bibitem[Bu1]{Bu1} A.~Burmester, \textit{An algebraic approach to multiple $q$-zeta values}, Ph.D. thesis, Universit\"at Hamburg, 2023.

\bibitem[Bu2]{Bu2} A.~Burmester, \textit{A generalization of formal multiple zeta values related to multiple Eisenstein series and multiple $q$-zeta values}, J. Number Theory \textbf{269} (2025), 106--137.

\bibitem[DDDHKSSV]{DDDHKSSV} D.~Dorigoni, M.~Doroudiani, J.~Drewitt, M.~Hidding, A.~Kleinschmidt, O.~Schlotterer, L.~Schneps and B.~Verbeek, \textit{Canonicalizing zeta generators: genus zero and genus one}, Comm. Math. Phys. \textbf{407} (2026), Paper No.~12, 90~pp.

\bibitem[Ec1]{Ec1} J.~\'{E}calle, \textit{ARI/GARI, la dimorphie et l'arithm\'{e}tique des multiz\^{e}tas: un premier bilan}, J. Th\'{e}or. Nombres Bordeaux \textbf{15} (2003), no.~2, 411--478.

\bibitem[Ec2]{Ec2} J.~\'{E}calle, \textit{The flexion structure and dimorphy: flexion units, singulators, generators, and the enumeration of multizeta irreducibles}, in: Asymptotics in dynamics, geometry and PDEs, generalized Borel summation, Vol.~II, CRM Series \textbf{12}, Ed. Norm., Pisa, 2011, 27--211.

\bibitem[FHK]{FHK} H.~Furusho, M.~Hirose and N.~Komiyama, \textit{Associators in mould theory}, to appear in J. Lie Theory, \href{https://arxiv.org/abs/2312.15423}{arXiv:2312.15423}.

\bibitem[Kaw1]{Kaw1} H.~Kawamura, \textit{A note on flexion units}, \href{https://arxiv.org/abs/2506.22825}{arXiv:2506.22825}.

\bibitem[Kaw2]{Kaw2} H.~Kawamura, \textit{\'{E}calle's senary relation and dimorphic structures}, \href{https://arxiv.org/abs/2509.21252}{arXiv:2509.21252}.

\bibitem[Kue]{Kue} U.~K\"uhn, \textit{Lie-algebras associated to multiple $q$-zeta values}, talk slides for the workshop \textit{Expansions, Lie Algebras, and Invariants}, CRM, Montr\'eal, July 9, 2019, \href{https://www.math.utoronto.ca/~drorbn/Talks/CRM-1907/Kuehn_montreal_2019.pdf} {available online}.

\bibitem[Ko]{Ko} N.~Komiyama, \textit{On properties of $\operatorname{adari}(\operatorname{pal})$ and $\operatorname{ganit}_{v}(\operatorname{pic})$}, \href{https://arxiv.org/abs/2110.04834}{arXiv:2110.04834}.

\bibitem[Ma]{Ma} T.~Maesaka, \textit{Flexion unit 4: properties of gaxit} (in Japanese), Mathlog article, June 19, 2026, \href{https://mathlog.info/articles/Hlr9Z4pBjCdudo1rYgu4} {mathlog.info/articles/Hlr9Z4pBjCdudo1rYgu4}.

\bibitem[Rac]{Rac} G.~Racinet, \textit{Doubles m\'elanges des polylogarithmes multiples aux racines de l'unit\'e}, Publ. Math. Inst. Hautes \'Etudes Sci. \textbf{95} (2002), 185--231.

\bibitem[Sc]{Sc} L.~Schneps, \textit{ARI, GARI, Zig and Zag: An introduction to \'{E}calle's theory of multiple zeta values}, \href{https://arxiv.org/abs/1507.01534}{arXiv:1507.01534v4}.

\end{thebibliography}
\end{document}